\newtheorem{notation}{Notations}[section]
\newtheorem{remarque}[notation]{Remark}
\newtheorem{thm}[notation]{Theorem}
\newtheorem{cor}[notation]{Corollary}
\newtheorem{prop}[notation]{Proposition}
\newcommand{\gga}{\gamma}            
\newcommand{\gd}{\delta}
\newcommand{\gl}{\lambda}
\newcommand{\eps}{\varepsilon}
\newcommand{\R}{\mathbb R}
\newcommand{\N}{\mathbb{N}}
\newcommand{\cC}{\mathcal C}
\newcommand{\cH}{\mathcal{H}}
\newcommand{\cM}{\mathcal M}
\newcommand{\cD}{\mathcal D}
\newcommand{\dive}{\operatorname{div}}
\newcommand{\bEm}{\mathbb{E}_m}
\newcommand{\FCb}{\mathcal{F}\cC_b^1}
\newcommand{\vgamma}[1]{\gga(#1)}
\newcommand{\Pgamma}[1]{P_\gga(#1)}
\newcommand{\normH}[1]{|#1|_H}
\newcommand{\totvar}[1]{\int_X \normH{D_\gga #1} }
\newcommand{\totvarn}[1]{\int_{\R^m} |D_\gga #1| }
\newcommand{\Ldeufin}{L^2_{\gga_m} (\R^m)}
\newcommand{\Ldeu}{L^2_\gga(X)}
\newcommand{\sprod}[2]{\langle #1, #2 \rangle}
\newcommand{\hath}{\hat{h}}
\newcommand{\dom}{\operatorname{dom}}
\def\Div{\textup{div}\,}
\def\e{\varepsilon}
\def\wto{\rightharpoonup}
\begin{document}
\title{\Large Representation, relaxation and convexity for variational problems in Wiener spaces}

\author{A. Chambolle \footnote{CMAP, CNRS UMR 7641, Ecole Polytechnique,
        91128 Palaiseau, France, email: chambolle@cmap.polytechnique.fr},
\and
        M. Goldman
        \footnote{CMAP, CNRS UMR 7641, Ecole Polytechnique,
        91128 Palaiseau, France, email: michael.goldman@polytechnique.edu}
        \and M. Novaga
        \footnote{Dipartimento di Matematica, Universit\`a di Padova,
        via Trieste 63, 35121 Padova, Italy, email: novaga@math.unipd.it}
}
\date{}
\maketitle

\begin{abstract}
\noindent We show convexity of solutions to a class of convex variational problems in the Gauss and in the Wiener space. An important tool in the proof is a representation formula for integral functionals in this infinite dimensional setting, that extends analogous results valid in the classical Euclidean framework.
\end{abstract}

\section{Introduction}

\noindent The aim of this paper is to study the convexity of the minimizers of some variational problems in Wiener spaces. In the Euclidean setting convexity is a widely discussed issue \cite{kawohl}. 
Recently, following previous work by Korevaar \cite{kore} and Alvarez, Lasry and Lions \cite{ALL}, Alter, Caselles and Chambolle 
\cite{ACC,CaCha} showed the convexity of solutions to variational problems involving functionals with linear growth and in particular to the 
prescribed curvature problem. Using quite different techniques, Figalli and Maggi \cite{figmag} proved the convexity of small mass minimizers of this problem.

\noindent The main goal of this paper, is to extend these results to the (finite dimensional) Gauss space 
and to the (infinite dimensional) Wiener space. In this setting, very few results are currently available. 
To the best of our knowledge, the only result in this direction is contained in \cite{CaMiNo}, 
where the authors proved the convexity of the solutions of the isoperimetric problem in convex domains.
More explicitly they prove the following:

\begin{thm}\emph{\cite{CaMiNo}}\label{CaMiNointro}
Let $C$ be a convex set of positive (Gaussian) measure and of finite (Gaussian) perimeter, then there exists $\alpha>0$ such that for every $v\in[\alpha, \vgamma{C}]$, the solution of the constrained isoperimetric problem
\[
\min\left\{\Pgamma{E} \; :\; E\subseteq C \ {\rm and\ } \vgamma{E}=v\right\}
\]
has a unique solution which is convex.
\end{thm}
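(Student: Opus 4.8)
The plan is to combine the direct method, the first--order optimality conditions for Gaussian perimeter minimizers, and a concavity maximum principle in the spirit of Korevaar and Alvarez--Lasry--Lions, adapted to the Ornstein--Uhlenbeck operator. First I would prove existence of a minimizer $E_v$ by the direct method: since $0<v\le\vgamma{C}$ and $C$ has finite Gaussian perimeter, the class $\{E\subseteq C:\vgamma{E}=v\}$ is nonempty, and $\Pgamma{\cdot}$ is lower semicontinuous and coercive for $L^1_\gga$--convergence, under which both constraints $E\subseteq C$ and $\vgamma{E}=v$ are stable. Then I would pass to the relaxed scalar problem
\[
\min\Big\{\textstyle\int_X|D_\gga u|-\gl\int_X u\,d\gga\ :\ u\in BV_\gga(X;[0,1]),\ u=0\ \gga\text{-a.e. on }X\setminus C\Big\},
\]
and check, via the coarea formula and a bang--bang/truncation argument, that its minimizers are precisely the characteristic functions of set--minimizers of $\Pgamma{E}-\gl\vgamma{E}$ over $E\subseteq C$, $E_v$ being one of them for a suitable $\gl=\gl(v)$. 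Fenchel duality for this convex problem yields convexity of the profile $I(v):=\min\{\Pgamma{E}:E\subseteq C,\ \vgamma{E}=v\}$; combined with the submodularity inequality $\Pgamma{E\cap F}+\Pgamma{E\cup F}\le\Pgamma{E}+\Pgamma{F}$ and the identity $\vgamma{E\cap F}+\vgamma{E\cup F}=\vgamma{E}+\vgamma{F}$, this forces the maximal minimizers to be nested, $E_{v_1}\subseteq E_{v_2}$ for $v_1\le v_2$, with $\gl(\cdot)$ monotone; uniqueness on the relevant range of volumes follows from this monotone structure.

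Next I would invoke the regularity theory for Gaussian perimeter minimizers --- De Giorgi's theory in finite dimensions, and cylindrical finite--dimensional approximation together with dimension--free density estimates in the Wiener case --- to deduce that the free boundary $\Sigma_v:=\partial^* E_v\cap\mathrm{int}(C)$ is a smooth hypersurface of constant Gaussian mean curvature $H_\gga=\gl(v)$, meeting $\partial C$ with the contact condition dictated by the constraint (orthogonality where $\partial C$ is smooth, a supporting--hyperplane inequality otherwise). This is also where the threshold $\alpha>0$ enters: one should show that for $v\ge\alpha$ the minimizer is connected, $\Sigma_v$ has no component of ``unfavourable'' curvature, and the contact set on $\partial C$ is itself convex, whereas for $v<\alpha$ the minimizer may break into pieces resting against different portions of $\partial C$, for which the convexity mechanism below breaks down.

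For the convexity itself I would use the nested family to define $u(x):=\sup\{v\in[\alpha,\vgamma{C}]:x\in E_v\}$ on $C$, so that $\{u>s\}=E_s$ for $s\in(\alpha,\vgamma{C})$ and the convexity of all the $E_v$'s is equivalent to quasi--concavity of $u$. To obtain this I would first replace $\Pgamma{\cdot}$ by the uniformly elliptic regularization $w\mapsto\int_C\sqrt{\e^2+|D_\gga w|^2}$, whose minimizers $u_\e$ solve a genuine elliptic equation with principal part the Gaussian--weighted mean--curvature operator and with the constraint ``$u_\e=0$ outside the convex set $C$'', which is convexity--compatible. On this regularized problem I would run the two--point concavity maximum principle: assuming $\tfrac12\big(u_\e(x)+u_\e(y)\big)>u_\e\big(\tfrac{x+y}{2}\big)$ somewhere, localize at an interior maximum of the concavity defect and derive a contradiction from the equation, using degenerate ellipticity, the favourable sign of the Ornstein--Uhlenbeck drift $-x\cdot\nabla$, and monotonicity of the mean--curvature nonlinearity in the relevant direction, the boundary cases ($x$ or $y$ on $\partial C$, or levels near $\alpha$) being closed by convexity of $C$ and the choice of $\alpha$. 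Letting $\e\to0$ and identifying the limit via the uniqueness of the first step, I would conclude that $u$ is quasi--concave, hence every $E_v$ with $v\in[\alpha,\vgamma{C}]$ is convex. An alternative route is to prove directly that the second fundamental form of $\Sigma_v$ is nonnegative by a tensor maximum principle on $\Sigma_v$, based on a Simons--type identity together with the convexity of $\partial C$ to handle the contact terms.

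I expect the crux to be this convexity step, for two reasons: first, carrying the concavity maximum principle through the degenerate limit $\e\to0$ while retaining the domain constraint, which requires that no essential quantitative use be made of interior ellipticity and that the contact behaviour at $\partial C$ be compatible with concavity --- precisely where the convexity of $C$ is used, and why the threshold $\alpha$ seems unavoidable; and second, in the Wiener case, running this whole scheme under dimension--free estimates, since free--boundary regularity, the very notion of Gaussian mean curvature, and the maximum principle all a priori live in finite dimensions and must be stable as the dimension diverges --- this is where the Gaussian isoperimetric inequality and the infinite--dimensional Ornstein--Uhlenbeck semigroup do the work.
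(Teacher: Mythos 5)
First, note that the paper does not prove this statement at all: Theorem \ref{CaMiNointro} is quoted verbatim from \cite{CaMiNo}, and the present paper's own contribution (Theorems \ref{convlinear} and \ref{main}) only shows how such results follow from convexity of minimizers of auxiliary ROF-type problems. So the relevant comparison is with the strategy of \cite{CaMiNo} and with the proof of Theorem \ref{main} here, and against that strategy your proposal has genuine gaps. The most serious one is the regularity step: you deduce that $\partial^* E_v$ is a smooth hypersurface of constant Gaussian mean curvature meeting $\partial C$ with a contact condition, and then run a Simons-type tensor maximum principle or a two-point concavity principle on the ($\e$-regularized) equation. In the Wiener space there is no De Giorgi-type regularity theory for perimeter minimizers and no ``dimension-free density estimates'' giving smooth free boundaries; even in $\R^m$ with Gaussian weight, the obstacle $\partial C$ is only convex (possibly non-smooth), so the contact terms you wave at are not under control. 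The known proofs never use any regularity of the minimizing sets: convexity is established at the level of an auxiliary \emph{function} $u$ minimizing an unconstrained total-variation-plus-quadratic functional, by cylindrical approximation $\bEm$ to reduce to $\R^m$, approximation by smooth uniformly convex energies where the Korevaar/Alvarez--Lasry--Lions viscosity arguments apply (exactly as in Theorem \ref{convfin}), and stability of convexity under weak $\Ldeu$ limits (Feyel--\"Ust\"unel); the sets $E_\lambda$ are then recovered as level sets of $u$ via the coarea formula, as in the proof of Theorem \ref{main}. A maximum principle run directly on a constrained, regularized problem in the infinite-dimensional space $X$, ``under dimension-free estimates,'' is precisely what is not available.

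There are also two structural problems in your scheme. You claim Fenchel duality gives convexity of the profile $I(v)$ and that uniqueness and the nested structure come first, with convexity last; but the step that realizes every volume $v\in[\alpha,\vgamma{C}]$ by some $E_\lambda$, and which yields uniqueness, is the continuity of $\lambda\mapsto\vgamma{E_\lambda}$, and in the actual argument this continuity is a \emph{consequence} of the convexity of the level sets (Proposition \ref{contconv}); your ordering therefore hides a circular dependency, and the asserted convexity of the constrained isoperimetric profile is unjustified (the unconstrained Gaussian profile is not convex). Finally, the threshold $\alpha$ is not a connectedness/curvature threshold as you speculate: it is the measure of the minimal level set of the auxiliary function (a Cheeger-type set of $C$), i.e.\ the exact analogue of $\overline v=\gga(\{u\le\overline\lambda\})$ in Theorem \ref{main}, below which the level-set family simply does not reach the prescribed volume. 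Your first step (direct method plus the scalar relaxation, coarea and the submodularity argument for nestedness of minimizers of the penalized problems) is sound and close in spirit to \cite{CaMiNo}, but the core convexity mechanism as you propose it would not go through.
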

 
\noindent We are interested in the convexity of solutions of the problem

\begin{equation}\label{princprob}
\min_{\vgamma{E}=v} \Pgamma{E}-\int_E g(x) \, d\gga(x),
\end{equation}
where $g$ is a convex function.

\noindent The idea is to follow the approach of Caselles and Chambolle \cite{CaCha} in the Euclidean case. We will thus be naturally led to consider the variational problem
 
 \begin{equation}\label{princprobu}
 \min_{ BV_\gga \cap \Ldeu} \totvar{u} +\frac{1}{2} \int_X (u-g)^2 d\gga
 \end{equation}
for which we will show convexity of the minimizers. More generally, we will prove that minimizers of 
\begin{equation}\label{princprobF}\min_{  \Ldeu} \int_X F(D_\gga u)\, d\gga +\frac{1}{2} \int_X (u-g)^2 d\gga\end{equation}
are convex if $F$ and $g$  are convex (see Theorems \ref{convfin} and \ref{convlinear}).

\noindent Extending the variational methods from Euclidean to Wiener spaces is now a quite active field. In particular extending the theory of functions of bounded variation to this setting started with the work of Fukushima \cite{fuku} and Fukushima and Hino \cite{fukuhino}. Since then the properties of $BV$ functions and sets of finite perimeter have been investigated by Ambrosio and his collaborators, see \cite{AMMP} in particular but also \cite{AMP} and \cite{AF}. We also refer to the paper  \cite{GNmodica} where relaxation of the perimeter, isoperimetry and symmetrization are investigated with application to a kind of Modica-Mortola result. 
We must point out that this theory of $BV$ functions is strongly linked with older works of Ledoux and Malliavin \cite{ledoux}, \cite{malliavin}.  \\

\noindent The plan of the paper is the following. 
In Section \ref{not} we recall some notation about the Wiener space and functions of bounded variation. In Section \ref{repr} we prove a useful representation formula for integral functionals on Wiener spaces. In Section \ref{secfin} we show the convexity of the minima of \eqref{princprobu} in finite dimension, and in Section \ref{secinf} we investigate the convexity of the minimizers in the infinite dimensional Wiener space.\\

\section{Notation and preliminary results}\label{not}

\noindent A clear and comprehensive reference on the Wiener space is the book by Bogachev \cite{boga} (see also \cite{malliavin}). 
We follow here closely the notation of \cite{AMMP}. Let $X$ be a separable Banach space and let $X^*$ be its dual. We say that $X$ is a Wiener space if it is endowed with a non-degenerate centered Gaussian probability measure $\gga$. That amounts to say that $\gga$ is a probability measure for which $x^*_\sharp \gga$ is a centered Gaussian measure on $\R$ for every $x^*\in X^*$. The non-degeneracy hypothesis means that $\gga$ is not concentrated 
on any proper subspace of $X$.

\noindent As a consequence of Fernique's Theorem \cite[Theorem 2.8.5]{boga}, for every $x^*\in X^*$, the function $R^* x^*(x)=\sprod{x^*}{x}$ is in $\Ldeu=L^2(X,\gga)$. Let $\cH$ be the closure of $R^*X^*$ in $\Ldeu$; the space $\cH$ is usually called the reproducing kernel Hilbert space of $\gga$. Let $R$, the operator from $\cH$ to $X$, be the adjoint of $R^*$ that is, for $\hath \in \cH$,
\[R \hath=\int_X x \hath(x)\, d\gga \]
where the integral is to be intended in the Bochner sense. It can be shown that $R$ is a compact and injective operator \cite{boga}. We will let $Q=RR^*$ so that for every $x^*$, $y^* \in X^*$,
\[\langle Q x^* ,y^* \rangle=\int_X \langle x^*,x\rangle \langle y^*, x\rangle \ d\gga.\]
 We denote by $H$ the space $R\cH \subset X$. This space is called the Cameron-Martin space. It is a separable Hilbert space with the scalar product given by
\[
[h_1,h_2]_H=\sprod{\hath_1}{\hath_2}_{\Ldeu}
\]
if $h_i=R\hath_i$. We will denote by $\normH{\cdot}$ the norm in $H$. 
The space $H$ is a dense subspace of $X$, with compact embedding, and $\gga(H)=0$ if $X$ is of infinite dimension. 

\noindent For $x_1^*, .., x_m^*\in X^*$ we denote by $\Pi_{x_1^*, .., x_m^*}$ the projection from $X$ to $\R^m$ given by
\[\Pi_{x_1^*,.., x_m^*}(x)=(\sprod{x_1^*}{x},.., \sprod{x_m^*}{x}).\]
We will also denote it by $\Pi_m$ when  specifying the points $x_i^*$ is unnecessary. Two elements $x_1^*$ and $x_2^*$ of $X^*$ will be called orthonormal if the corresponding $h_i=Qx_i^*$ are orthonormal in $H$ (or equivalently if $x_1^*$ and $x_2^*$ are orthonormal in $\Ldeu$). We will fix in the following an orthonormal basis of $H$ given by $h_i=Q x_i^*$.

\noindent We also denote by $H_m=\textrm{span}(h_1, .., h_m)$ and $X_m^\perp=\textrm{Ker}(\Pi_m)=\overline{H_m^\perp}^X$,
so that $X\cong \R^m\oplus X_m^\perp$.
The map $\Pi_m$ induces the decomposition $\gga=\gga_m \otimes\gga_m^\perp$, with $\gga_m,\,\gga_m^\perp$ Gaussian measures
on $\R^m,\,X_m^\perp$ respectively.

\begin{prop}\emph{\cite{boga}}
Let $\hath_1, .., \hath_m$ be in $\cH$ then the image measure of $\gga$ under the map
\[\Pi_{\hath_1, .., \hath_m} (x)= (\hath_1(x), .. , \hath_m(x))\]
is a Gaussian in $\R^m$. If the $\hath_i$ are orthonormal, then such measure is the standard Gaussian measure on $\R^m$.
\end{prop}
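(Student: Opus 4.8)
The plan is to use the standard characterization that a Borel probability measure on $\R^m$ is centered Gaussian if and only if all of its one-dimensional marginals are centered Gaussian, combined with an approximation argument that reduces the general case to the case of elements of $R^*X^*$.

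First I would treat the case $\hath_j=R^*x_j^*$ for some $x_j^*\in X^*$, so that $\Pi_{\hath_1,\dots,\hath_m}(x)=(\sprod{x_1^*}{x},\dots,\sprod{x_m^*}{x})$ is the restriction to $X$ of a bounded linear map into $\R^m$. For any $t=(t_1,\dots,t_m)\in\R^m$ one has $\sum_j t_j\sprod{x_j^*}{x}=\sprod{\sum_j t_jx_j^*}{x}$, which is the evaluation of the single functional $\sum_j t_jx_j^*\in X^*$; by the very definition of a Wiener space its law under $\gga$ is a centered Gaussian on $\R$. Hence every one-dimensional marginal of the push-forward $(\Pi_{\hath_1,\dots,\hath_m})_\sharp\gga$ is centered Gaussian, so this push-forward is itself a centered Gaussian measure on $\R^m$. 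Its covariance matrix has entries $\int_X \sprod{x_i^*}{x}\sprod{x_j^*}{x}\,d\gga=\sprod{\hath_i}{\hath_j}_{\Ldeu}=[h_i,h_j]_H$ with $h_i=Qx_i^*=R\hath_i$; in particular, when the $\hath_i$ (equivalently the $h_i$) are orthonormal the covariance is the identity, so the measure is the standard Gaussian on $\R^m$.

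For general $\hath_j\in\cH$, pick $x_j^{*,n}\in X^*$ with $R^*x_j^{*,n}\to\hath_j$ in $\Ldeu$ as $n\to\infty$, which is possible since $\cH$ is by definition the closure of $R^*X^*$ in $\Ldeu$. Writing $\Pi^n$ for the map associated with $(R^*x_1^{*,n},\dots,R^*x_m^{*,n})$, convergence in $\Ldeu$ gives $\sprod{t}{\Pi^n(\cdot)}\to\sprod{t}{\Pi_{\hath_1,\dots,\hath_m}(\cdot)}$ in $\Ldeu$, hence in $\gga$-probability, for every $t\in\R^m$. Passing to the limit in the characteristic functions
\[
\int_X e^{i\sprod{t}{\Pi^n(x)}}\,d\gga=\exp\!\Big(-\tfrac12\sprod{C^n t}{t}\Big),
\]
where $C^n_{ij}=\int_X (R^*x_i^{*,n})(x)\,(R^*x_j^{*,n})(x)\,d\gga\to\sprod{\hath_i}{\hath_j}_{\Ldeu}$, one obtains by dominated convergence that the characteristic function of $(\Pi_{\hath_1,\dots,\hath_m})_\sharp\gga$ equals $\exp(-\tfrac12\sprod{Ct}{t})$ with $C_{ij}=\sprod{\hath_i}{\hath_j}_{\Ldeu}$. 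Hence the image measure is the centered Gaussian with covariance $C$, which is exactly the standard Gaussian on $\R^m$ when the $\hath_i$ are orthonormal in $\cH$ (equivalently in $\Ldeu$).

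The only mildly delicate point is the passage to the limit in the last step: one must use the density of $R^*X^*$ in $\cH$ and the fact that pointwise convergence of characteristic functions to a Gaussian one identifies the limit. The elementary dominated convergence argument above handles this directly and avoids invoking L\'evy's continuity theorem; everything else is just the definition of a Wiener space together with the isometry $[h_i,h_j]_H=\sprod{\hath_i}{\hath_j}_{\Ldeu}$ recalled before the statement.
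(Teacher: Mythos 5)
Your argument is correct, and since the paper states this proposition without proof (it is quoted from \cite{boga}), there is nothing internal to compare it with: reducing to functionals $R^*x^*$, using that finite-dimensional Gaussianity is equivalent to all one-dimensional marginals $\sprod{t}{\Pi(\cdot)}=\sum_j t_j\sprod{x_j^*}{\cdot}$ being Gaussian, and then passing from $R^*X^*$ to $\cH$ by $\Ldeu$-density and convergence of characteristic functions is exactly the standard proof the citation refers to. One cosmetic remark: the limit passage is cleaner if, instead of invoking dominated convergence (which needs a.e.\ convergence, available only along a subsequence), you use the bound $|e^{ia}-e^{ib}|\le |a-b|$ to get $\bigl|\int_X e^{i\sprod{t}{\Pi^n}}\,d\gga-\int_X e^{i\sprod{t}{\Pi}}\,d\gga\bigr|\le \|\sprod{t}{\Pi^n-\Pi}\|_{\Ldeu}\to 0$; this, together with $C^n\to C$, identifies the characteristic function of the fixed measure $(\Pi_{\hath_1,\dots,\hath_m})_\sharp\gga$ directly, so no continuity theorem is needed, as you note.
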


\noindent Given $u\in \Ldeu$, we will consider the canonical cylindrical approximation $\bEm$ given by
\[\bEm u (x)=\int_{X_m^\perp} u(\Pi_m(x),y) \,d\gga_m^\perp(y).\]
Notice that $\bEm u$ depends only on the first $m$ variables 
(we call such function a cylindrical function) and $\bEm u$ converges  to $u$ in $\Ldeu$.

\noindent We will denote by $\FCb(X)$ the space of all cylindrical  $\cC^1$ bounded functions that is the functions of the form $v(\Pi_m (x))$ with $v$ a $\cC^1$ bounded function from $\R^n$ to $\R$. 
We denote by $\FCb(X,H)$ the space generated by all functions of the form 
$\Phi h$, with $\Phi\in \FCb(X)$ and $h\in H$.

\noindent We now give the definitions of gradients, Sobolev spaces and functions of bounded variation. 
Given $u: X\rightarrow \R$ and $h=R\hath \in H$, we define
\[\frac{\partial u} {\partial h} (x)=\lim_{t\to 0} \frac{u(x+th)-u(x)}{t}\]
whenever the limit exists, and
\[\partial_h^* u= \frac{\partial u}{\partial h} - \hath u. \]
We define $\nabla_H u: X\rightarrow H$,  the gradient of $u$ by
\[\nabla_H u= \sum_{i=1}^{+\infty} \frac{\partial u}{\partial h_i}\,  h_i\]
and the divergence of $\Phi: X\rightarrow H$ by
\[\dive_\gga \Phi =\sum_{i=1}^{+\infty} \partial_{h_i}^* [\Phi,h_i]_H.\]

\noindent The operator $\dive_\gga$ is the adjoint of the gradient so that for every $u \in \FCb(X)$ and every $\Phi \in \FCb(X,H)$, the following integration by parts holds:
\begin{equation}\label{integpart}
\int_X u \dive_\gga \Phi \,d\gga =-\int_X [\nabla_H u, \Phi]_H d\gga.
\end{equation}
\noindent The $\nabla_H$ operator is closable in $L^2_\gga(X)$ and we will denote by $H^{1}_\gga(X)$ its closure in $L^2_\gga(X)$. Formula \eqref{integpart} still holds for $u \in H^{1}_\gga(X)$ and $\Phi \in \FCb(X,H)$. Analogously, we define the Sobolev spaves $W^{1,p}_\gga(X)$ for $p\ge 1$ (these spaces are denoted by $\mathbb{D}^{1,p}(X,\gga)$ in \cite{AMMP}).

\noindent Following \cite{fuku,AMMP}, given $u\in L^1_\gga(X)$ we say that $u\in BV_\gga(X)$ if
\[\totvar{u}=\sup \left\{ \int_X u \dive_\gga \Phi \, d\gga; \; \Phi \in \FCb(X,H), \; \normH{\Phi}\le 1 \; \forall x\in X\right\}<+\infty.\] 
We will also denote by $|D_\gga u|(X)$ the total variation of $u$. If $u=\chi_E$ is the characteristic function of a set $E$ we will denote $P_\gga(E)$ its total variation and say that $E$ is of finite perimeter if $\Pgamma{E}$ is finite. 

\noindent Let $\cM(X,H) $ be the set of countably additive measure on $X$ with values in $H$ with finite total variation. As shown in \cite{AMMP} we have the following properties of $BV_\gga(X)$ functions.

\begin{thm}\label{defBV}
Let $u\in BV_\gga(X)$ then the following properties hold:
\begin{itemize}
\item there exists  a measure $D_\gga u\in\cM(X,H)$  such that for every $\Phi \in \FCb(X)$ we have:
\[\int_X u \, \partial^*_{h_j} \Phi \; d \gga=- \int_X \Phi d\mu_j \qquad \forall j\in \N\]
where $\mu_j=[h_j,D_\gga u]_H$.
\item $|D_\gga u|(X)=\inf \varliminf \{ \int_X \normH{\nabla_H u_i} d\gga \;: \; u_j \in W^{1,1}_\gga(X), \; u_j\rightarrow u \textrm{ in } L^1_\gga(X) \}$.
\end{itemize}
\end{thm}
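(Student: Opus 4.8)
Both statements are established in \cite{AMMP}; we outline the argument, treating the two assertions in turn.

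\emph{First assertion.} Fix $j\in\N$ and test the definition of $|D_\gga u|(X)$ with vector fields of the special form $\Phi h_j$, $\Phi\in\FCb(X)$. Since $h_j$ is a unit vector of $H$ we have $\normH{\Phi h_j}=|\Phi|$, and since $[h_i,h_j]_H=\delta_{ij}$ we have $\dive_\gga(\Phi h_j)=\partial^*_{h_j}\Phi$; hence
\[
\Big|\int_X u\,\partial^*_{h_j}\Phi\,d\gga\Big|\le |D_\gga u|(X)\,\|\Phi\|_\infty .
\]
So $\Phi\mapsto\int_X u\,\partial^*_{h_j}\Phi\,d\gga$ is a linear functional on $\FCb(X)$, continuous for the sup norm. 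Restricting it to functions $\phi\circ\Pi_m$ with $\phi\in\cC^1_b(\R^m)$ and using the Riesz representation theorem on $\R^m$ provides, for each $m$, a finite signed measure on $\R^m$ of total variation at most $|D_\gga u|(X)$, and these measures are compatible under the projections $\Pi_m$. Because $\gga$ is a Radon (hence tight) measure on the separable Banach space $X$, this projective system is carried by a finite signed Radon measure $\mu_j$ on $X$, of total variation $\le |D_\gga u|(X)$, satisfying $\int_X u\,\partial^*_{h_j}\Phi\,d\gga=-\int_X\Phi\,d\mu_j$ for every $\Phi\in\FCb(X)$.

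To assemble the $\mu_j$ into a single $H$-valued measure, one proves that for every finite Borel partition $\{A_k\}$ of $X$
\[
\sum_k\Big(\sum_j\mu_j(A_k)^2\Big)^{1/2}\le |D_\gga u|(X),
\]
by testing $|D_\gga u|(X)$ against cylindrical $\cC^1$ approximations --- obtained by mollification on the appropriate $\R^m$ --- of the normalized field $\sum_j\big(\sum_k c_k^{-1}\mu_j(A_k)\,\mathbf{1}_{A_k}\big)h_j$, where $c_k=(\sum_j\mu_j(A_k)^2)^{1/2}$. This estimate shows that $D_\gga u:=\sum_j\mu_j h_j$ is a well defined element of $\cM(X,H)$ whose total variation agrees with $|D_\gga u|(X)$ and for which $[h_j,D_\gga u]_H=\mu_j$.

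\emph{Second assertion.} One inequality is lower semicontinuity of the total variation. If $u_j\in W^{1,1}_\gga(X)$ with $u_j\to u$ in $L^1_\gga(X)$, then for every $\Phi\in\FCb(X,H)$ with $\normH{\Phi}\le1$, using \eqref{integpart},
\begin{align*}
\int_X u\,\dive_\gga\Phi\,d\gga &=\lim_j\int_X u_j\,\dive_\gga\Phi\,d\gga=-\lim_j\int_X[\nabla_H u_j,\Phi]_H\,d\gga\\
&\le\varliminf_j\int_X\normH{\nabla_H u_j}\,d\gga,
\end{align*}
and taking the supremum over admissible $\Phi$ gives $|D_\gga u|(X)\le\varliminf_j\int_X\normH{\nabla_H u_j}\,d\gga$; hence $|D_\gga u|(X)$ is at most the infimum in the statement. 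For the reverse inequality we exhibit a recovery sequence. Since $\bEm$ is a conditional expectation, Jensen's inequality gives $\bEm u\in BV_\gga(X)$ with $|D_\gga(\bEm u)|(X)\le|D_\gga u|(X)$, and $\bEm u\to u$ in $L^1_\gga(X)$. Now $\bEm u$ is essentially a $BV$ function on $(\R^m,\gga_m)$, so a mollification --- for instance along the Ornstein--Uhlenbeck semigroup, or by a Gaussian-weighted convolution --- produces functions in $W^{1,1}_\gga(X)$ converging to $\bEm u$ in $L^1_\gga$ with gradient masses converging to $|D_\gga(\bEm u)|(X)$. A diagonal extraction then yields $u_j\in W^{1,1}_\gga(X)$ with $u_j\to u$ in $L^1_\gga$ and $\varlimsup_j\int_X\normH{\nabla_H u_j}\,d\gga\le|D_\gga u|(X)$, which together with lower semicontinuity gives the desired identity.

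The main difficulty lies in the first assertion: setting up a Riesz-type representation in the non-locally-compact space $X$ forces one to work through the finite-dimensional projections and to invoke the tightness of $\gga$, and --- more seriously --- assembling the scalar measures $\mu_j$ into a genuine $H$-valued measure with total variation controlled by $|D_\gga u|(X)$ requires approximating merely measurable, normalized vector fields by cylindrical $\cC^1$ ones in a way that is compatible simultaneously with the Gaussian measure and with the infinitely many coordinate directions.
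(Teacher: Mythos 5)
The paper itself gives no proof of Theorem \ref{defBV}: it is imported verbatim from \cite{AMMP} (``As shown in \cite{AMMP} we have the following properties\dots''), so the only meaningful comparison is with the proof given there, whose broad strategy---reduction to finite dimensions through the projections $\Pi_m$ and the cylindrical approximations $\bEm u$, then lower semicontinuity plus a regularization recovery sequence for the second identity---your outline does follow. Your treatment of the second assertion is essentially right, up to one repairable imprecision: for $\Phi\in\FCb(X,H)$ the divergence $\dive_\gga\Phi$ contains the unbounded factors $\hath_i$, so $\int_X u_j\dive_\gga\Phi\,d\gga\to\int_X u\,\dive_\gga\Phi\,d\gga$ does not follow from $u_j\to u$ in $L^1_\gga(X)$ alone; one must first note that the supremum defining $|D_\gga u|(X)$ is unchanged when restricted to cylindrical fields whose finite-dimensional part is compactly supported, for which $\dive_\gga\Phi$ is bounded.

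The genuine gap is in the first assertion, at the very point you flag as the main difficulty. From the compatible family of signed measures $\mu_j^{(m)}$ on $\R^m$ with uniformly bounded variation you conclude that ``because $\gga$ is Radon (hence tight)'' the projective system is carried by a signed Radon measure $\mu_j$ on $X$. This does not follow: $\mu_j$ is in general singular with respect to $\gga$ (think of $u=\chi_E$ with $E$ of finite perimeter; this is exactly what separates $BV_\gga$ from $W^{1,1}_\gga$), so tightness of $\gga$ gives no control on where the mass of the $\mu_j^{(m)}$ sits; moreover a projective system of \emph{signed} measures with uniformly bounded variation need not admit a countably additive limit, since Kolmogorov--Prokhorov-type extension requires a tightness estimate on the variation measures themselves and the Jordan decompositions of the $\mu_j^{(m)}$ need not be consistent under the projections. (A related small point: a sup-norm-bounded functional on $\cC^1_b(\R^m)$ is a priori represented only by a finitely additive measure; in finite dimensions one should rather use, as recalled in Section 2 of the paper, that $BV_{\gga_m}(\R^m)\subset BV_{\mathrm{loc}}(\R^m)$ with $D_{\gga_m}u=\gga_m\,Du$.) What is actually needed, and what \cite{AMMP} proves, is a tightness/compactness statement for the approximating derivative measures themselves (e.g.\ for $|D_\gga\bEm u|$, or for the gradients produced by the Ornstein--Uhlenbeck regularization), from which the limit measure is extracted and then identified through the integration-by-parts identity; the partition inequality you invoke to assemble the $\mu_j$ into an $H$-valued measure rests on the same construction. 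As an outline deferring to \cite{AMMP} this is acceptable, but as written the crucial existence step is asserted, not proved, and the justification offered for it is not the right one.
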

\noindent We next introduce the the Ornstein-Uhlenbeck semigroup. Let $u \in L^1_\gga(X)$ then 
\[T_t u(x) := \int_X u\left(e^{-t} x+\sqrt{1-e^{-2t}}y\right) \, d\gga(y).\]

\begin{prop}\label{ornstein}
 The Ornstein-Uhlenbeck semigroup satisfies:
\begin{itemize}
 \item if $u \in L^p_\gga(X)$ with $p>1$ then $T_t u \in W^{1,1}_\gga(X)$,
\item if $u \in L^p_\gga(X)$  with $p\ge 1$ then $T_t u$ converges in $L^p_\gga(X)$ to $ u$ when $t$ goes to zero,
\item for every $\Phi \in \FCb(X,H)$, and $u\in \Ldeu$,
\begin{equation}\label{integpartOU}\int_X T_t u \dive_\gga \Phi \, d\gga= e^{-t} \int_X u \dive_\gga T_t \Phi\,  d\gga,\end{equation}
\item if $\Phi \in \FCb(X,H)$ then $T_t \Phi \in \FCb(X,H)$,
\item for every convex function $F: H \to \R\cup \{+\infty\}$, and every $\Phi$,
\begin{equation}\label{jensF}\int_X F(T_t \Phi) d\gga \le \int_X F(\Phi) \, d\gga.\end{equation} 
\end{itemize}
\end{prop}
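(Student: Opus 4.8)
The plan is to reduce all five statements to finite-dimensional computations, via Mehler's formula and the cylindrical approximations $\bEm$; only the first (regularizing) property is genuinely delicate. For that one, I would first work with $u=v\circ\Pi_m$, $v\in L^p_{\gga_m}(\R^m)$, $p>1$, differentiate Mehler's formula under the integral sign and integrate by parts in the Gaussian variable (Gaussian integration by parts, $\int_{\R^m}y\,f\,d\gga_m=\int_{\R^m}\nabla f\,d\gga_m$) to get
\[
\nabla T_t v(x)=\frac{e^{-t}}{\sqrt{1-e^{-2t}}}\int_{\R^m}y\,v\big(e^{-t}x+\sqrt{1-e^{-2t}}\,y\big)\,d\gga_m(y),
\]
which by Hölder gives $T_t v\in W^{1,p}_{\gga_m}(\R^m)\subset W^{1,1}_{\gga_m}(\R^m)$ together with a bound $\|\nabla T_t v\|_{L^1_{\gga_m}}\le C(t,p)\|v\|_{L^p_{\gga_m}}$. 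The crux --- and the step I expect to be the main obstacle --- is that $C(t,p)$ can be chosen \emph{independent of $m$}. For $p\ge2$ this is immediate from the Wiener chaos decomposition: with $L=\dive_\gga\nabla_H$ and $I_k$ the projection onto the $k$-th chaos, $\|\nabla T_t v\|_{L^2}^2=\sum_{k\ge1}k\,e^{-2kt}\|I_k v\|_{L^2}^2\le(2et)^{-1}\|v\|_{L^2}^2$. For $1<p<2$ I would factor $\nabla T_t=\big(\nabla(-L)^{-1/2}\big)\circ\big((-L)^{1/2}T_t\big)$ and invoke the dimension-free $L^p$-boundedness of the Gaussian Riesz transform $\nabla(-L)^{-1/2}$ (Meyer's inequalities) together with the analytic-semigroup bound $\|(-L)^{1/2}T_t\|_{L^p\to L^p}\le C(p)\,t^{-1/2}$. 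Granting the $m$-uniform estimate, one passes to infinite dimensions: $T_t$ commutes with $\bEm$ (both are Gaussian averages, and $\bEm u$ is independent of the $X_m^\perp$ variables), so $T_t\bEm u=\bEm T_t u$ is cylindrical and the bound applies to it uniformly; applied to the differences $\bEm u-\mathbb{E}_{m'}u$ it shows that $(T_t\bEm u)_m$ is Cauchy in $W^{1,1}_\gga(X)$, and since $\bEm u\to u$ in $L^p_\gga(X)$ while $T_t$ is a contraction of $L^1_\gga(X)$, the limit is $T_tu$, whence $T_tu\in W^{1,1}_\gga(X)$.

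The strong continuity is routine: if $u=v\circ\Pi_m$ with $v$ bounded and continuous then $T_tu(x)\to u(x)$ pointwise as $t\to0$ by dominated convergence in Mehler's formula, and $\|T_tu\|_\infty\le\|u\|_\infty$, so $T_tu\to u$ in every $L^p_\gga(X)$; for general $u$ one uses density of such cylindrical functions, the contractivity $\|T_tu\|_{L^p_\gga}\le\|u\|_{L^p_\gga}$ (Jensen, $|T_tu|^p\le T_t(|u|^p)$), and an $\eps/3$ argument.

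The remaining three properties are algebraic consequences of Mehler's formula. Writing $\Phi=\sum_j f_j h_j\in\FCb(X,H)$ with $f_j=v_j\circ\Pi_m\in\FCb(X)$, we have $T_t\Phi=\sum_j(T_tf_j)h_j$, and $T_tf_j$ is the Gaussian average of the $C^1_b$ function $v_j$ of $m$ variables, hence again cylindrical, bounded by $\|v_j\|_\infty$, with $\partial_i T_tf_j=e^{-t}T_t(\partial_i f_j)$ bounded; thus $T_tf_j\in\FCb(X)$ and $T_t\Phi\in\FCb(X,H)$, which is the fourth property, and the same computation gives the commutation $\nabla_H T_t\psi=e^{-t}T_t\nabla_H\psi$ for $\psi\in\FCb(X)$. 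For \eqref{integpartOU} I would use that $T_t$ is self-adjoint on $\Ldeu$ (the Mehler kernel --- the joint law of $x$ and $e^{-t}x+\sqrt{1-e^{-2t}}y$ --- is symmetric) and then, testing against $\psi\in\FCb(X)$ and using the integration by parts \eqref{integpart} and the componentwise self-adjointness of $T_t$ on $H$-valued maps,
\[
\int_X\psi\,T_t(\dive_\gga\Phi)\,d\gga=\int_X T_t\psi\,\dive_\gga\Phi\,d\gga=-\int_X[\nabla_H T_t\psi,\Phi]_H\,d\gga=-e^{-t}\int_X[\nabla_H\psi,T_t\Phi]_H\,d\gga=e^{-t}\int_X\psi\,\dive_\gga(T_t\Phi)\,d\gga,
\]
so that $T_t(\dive_\gga\Phi)=e^{-t}\dive_\gga(T_t\Phi)$ in $\Ldeu$; combining this with self-adjointness gives \eqref{integpartOU}. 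Finally \eqref{jensF} is Jensen's inequality: $T_t\Phi(x)$ is the average of $\Phi$ against a probability measure, so $F(T_t\Phi(x))\le T_t(F\circ\Phi)(x)$ for every convex $F$, and integrating over $X$ and using the invariance $\int_X T_tw\,d\gga=\int_Xw\,d\gga$ yields $\int_X F(T_t\Phi)\,d\gga\le\int_X F(\Phi)\,d\gga$.
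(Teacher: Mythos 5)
Your argument is correct, but it is worth noting that the paper does not actually prove this proposition: it refers to \cite{AMMP} for all items except \eqref{jensF}, for which it gives exactly your argument (Jensen's inequality plus the rotation invariance of $\gga$, i.e.\ the fact that $(x,y)\mapsto e^{-t}x+\sqrt{1-e^{-2t}}\,y$ pushes $\gga\otimes\gga$ onto $\gga$). Your treatment of the algebraic items matches the standard one: the Mehler formula gives $T_t\Phi\in\FCb(X,H)$ and $\nabla_H T_t\psi=e^{-t}T_t\nabla_H\psi$, and combining this with the symmetry of $T_t$ on $\Ldeu$ yields \eqref{integpartOU} for $u\in\Ldeu$, which is precisely why the identity is stated only for square-integrable (or $L\log^{1/2}L$) $u$ and not for all of $L^1_\gga$. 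Where you genuinely diverge from the source is the regularizing property: in \cite{AMMP} the dimension-free bound is obtained in one stroke from the very formula you derived, $\nabla_H T_t u(x)=\frac{e^{-t}}{\sqrt{1-e^{-2t}}}\int_X \hat y\, u(e^{-t}x+\sqrt{1-e^{-2t}}y)\,d\gga(y)$, by pairing the Gaussian factor against $u$ through the Orlicz duality between $L\log^{1/2}L$ and the exponential-square class; this gives $\|\nabla_H T_t u\|_{L^1_\gga}\le c(t)\|u\|_{L\log^{1/2}L}$, covers all $p>1$ simultaneously (since $L^p_\gga\subset L\log^{1/2}L$), and is exactly what lies behind the paper's remark following the proposition. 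Your route --- chaos decomposition for $p\ge 2$ and the factorization $\nabla_H T_t=\bigl(\nabla_H(-L)^{-1/2}\bigr)\circ\bigl((-L)^{1/2}T_t\bigr)$ with Meyer's inequalities and analyticity of $T_t$ on $L^p$ for $1<p<2$ --- is valid and dimension-free, but it imports substantially heavier machinery than needed (and tacitly restricts to mean-zero functions in the factorization, a harmless fix). The finite-to-infinite dimensional passage via $T_t\bEm=\bEm T_t$, martingale convergence of $\bEm u$, and closedness of $\nabla_H$ in $L^1_\gga$ is sound and is essentially how such statements are localized in this setting.
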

\noindent The proof of this proposition can be found in \cite{AMMP}. The only additional property here is \eqref{jensF} which follows from Jensen's inequality and the rotation invariance of the measure $\gga$. 
\begin{remarque}\rm
 Notice that \eqref{integpartOU} holds more generally for $u$ in the Orlicz space $L\log^{\frac{1}{2}}L$ but not for a general $u$ in $L^1_\gga(X)$ (see \cite{AMMP}).
\end{remarque}
\begin{prop}
Let $u=v(\Pi_m)$ be a cylindrical function then $u\in BV_\gga(X)$ if and only if $v\in BV_{\gga_m}(\R^m)$. We then have
\[\totvar{u}=\totvarn{v}.\]
\end{prop}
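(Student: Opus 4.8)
The plan is to compare the two supremum definitions of the total variation by transporting cylindrical test vector fields along the coordinate projections $\Pi_n$. Write $u=v(\Pi_m)$, and note $u\in L^1_\gga(X)$ if and only if $v\in L^1_{\gga_m}(\R^m)$, since $(\Pi_m)_\sharp\gga=\gga_m$. The computation I would set up first is the following: for any $n\ge m$ and any bounded $\cC^1$ map $\psi=(\psi_1,\dots,\psi_n):\R^n\to\R^n$, the field $\Phi(x)=\sum_{j=1}^n\psi_j(\Pi_n x)\,h_j$ belongs to $\FCb(X,H)$, satisfies $\normH{\Phi(x)}=|\psi(\Pi_n x)|$, and — using $\sprod{x_i^*}{h_j}=\delta_{ij}$, $\hath_j(x)=(\Pi_n x)_j$, and $\partial^*_{h_j}w=\partial_{h_j}w-\hath_j w$ — satisfies $\dive_\gga\Phi(x)=(\dive_{\gga_n}\psi)(\Pi_n x)$, where $\dive_{\gga_n}\psi=\sum_{j=1}^n(\partial_j\psi_j-y_j\psi_j)$ is the $n$-dimensional Gaussian divergence. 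Writing $u=\tilde v(\Pi_n)$ with $\tilde v(y_1,\dots,y_n):=v(y_1,\dots,y_m)$ and using $(\Pi_n)_\sharp\gga=\gga_n$, this gives the identity
\[
\int_X u\,\dive_\gga\Phi\,d\gga=\int_{\R^n}\tilde v\,\dive_{\gga_n}\psi\,d\gga_n .
\]
Applying it with $n=m$, as $\psi$ ranges over $\cC^1_c(\R^m;\R^m)$ with $|\psi|\le1$ (each corresponding $\Phi$ being admissible for $\totvar{u}$), and taking suprema immediately yields $\totvar{u}\ge\totvarn{v}$.

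For the reverse inequality, take an arbitrary $\Phi\in\FCb(X,H)$ with $\normH{\Phi}\le1$. Choosing $n\ge m$ so large that every scalar profile entering the definition of $\Phi$ depends only on $\Pi_n$, one may reduce to $\Phi=\sum_{j=1}^n\Phi_j(\Pi_n)\,h_j$ with $\sum_j\Phi_j^2\le1$ — the components along $h_j$ for $j>n$ not contributing to $\int_X u\,\dive_\gga\Phi\,d\gga$, because $u$ is cylindrical over $\Pi_m$ and each such $\hath_j$ is then $\gga$-independent of $u$ and mean zero — and, after multiplying by a cut-off $\chi_R(\Pi_n)$, to the case where the $\Phi_j$ are compactly supported. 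Splitting $\R^n=\R^m\times\R^{n-m}$, $\gga_n=\gga_m\otimes\gga_{n-m}$, $y=(y',y'')$, the identity above reads $\int_X u\,\dive_\gga\Phi\,d\gga=\int_{\R^n}\tilde v(y')\,\dive_{\gga_n}\Phi(y)\,d\gga_n(y)$. The part of $\dive_{\gga_n}\Phi$ differentiating the $y''$-coordinates integrates to $0$ against $\gga_{n-m}$ (being a sum of $y''$-adjoint derivatives, each orthogonal to constants in $L^2_{\gga_{n-m}}$), while integrating the $y'$-part over $y''$ produces $\dive_{\gga_m}\bar\Phi(y')$, where $\bar\Phi_j(y'):=\int_{\R^{n-m}}\Phi_j(y',y'')\,d\gga_{n-m}(y'')$ for $j\le m$. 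Differentiation under the Gaussian integral shows $\bar\Phi\in\cC^1_c(\R^m;\R^m)$, and Jensen's inequality gives $|\bar\Phi(y')|^2\le\int_{\R^{n-m}}\sum_{j=1}^n\Phi_j^2\,d\gga_{n-m}\le1$; hence $\int_X u\,\dive_\gga\Phi\,d\gga=\int_{\R^m}v\,\dive_{\gga_m}\bar\Phi\,d\gga_m\le\totvarn{v}$. Removing the cut-off (the errors vanish because $u\in L^1_\gga(X)$) and taking the supremum over $\Phi$ gives $\totvar{u}\le\totvarn{v}$.

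Combining the two inequalities gives $\totvar{u}=\totvarn{v}$ as elements of $[0,+\infty]$, which together with $u\in L^1_\gga(X)\Leftrightarrow v\in L^1_{\gga_m}(\R^m)$ proves both the equivalence $u\in BV_\gga(X)\Leftrightarrow v\in BV_{\gga_m}(\R^m)$ and the equality of the total variations. The points requiring care are of a bookkeeping nature: the reduction of a general $\Phi\in\FCb(X,H)$ to cylindrical form $\sum_{j=1}^n\Phi_j(\Pi_n)h_j$ with compactly supported profiles; the justification that the integrals $\int_X u\,\dive_\gga\Phi\,d\gga$ and $\int_{\R^m}v\,\dive_{\gga_m}\bar\Phi\,d\gga_m$ are genuinely convergent (which in the relevant case $v\in BV_{\gga_m}(\R^m)$ follows from the Gaussian embedding $BV_{\gga_m}\hookrightarrow L\log^{1/2}L$, making the limit in the cut-off step legitimate); and the verification that $\bar\Phi$ is of class $\cC^1$ and admissible.
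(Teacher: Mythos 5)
Your argument is essentially correct, and in fact the paper offers no proof of this proposition at all: it is recalled as a known fact (it follows from the results of Ambrosio--Maniglia--Miranda--Pallara cited as \cite{AMMP}), so there is nothing internal to compare against. Your two-sided scheme is the canonical one: the identity $\dive_\gga\bigl(\sum_{j\le n}\psi_j(\Pi_n)h_j\bigr)=(\dive_{\gga_n}\psi)\circ\Pi_n$ together with $(\Pi_n)_\sharp\gga=\gga_n$ gives $\totvar{u}\ge\totvarn{v}$, and averaging the test field over the complementary Gaussian factor (this is exactly the conditional expectation $\bEm$ applied to the components of $\Phi$, with Jensen preserving the constraint $\normH{\Phi}\le1$) gives the converse. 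Two points deserve more care than your sketch gives them. First, the reduction of a general $\Phi\in\FCb(X,H)$ to the form $\sum_{j\le n}\Phi_j(\Pi_n)h_j$ plus a negligible remainder is not quite as stated: the cylindrical profiles may be built on arbitrary elements of $X^*$ (not the fixed $x_j^*$), and the $H$-directions are arbitrary vectors of $H$, hence have infinitely many nonzero coordinates in the basis $(h_j)$. The fix is routine --- complete $\hath_1,\dots,\hath_m$ by Gram--Schmidt inside the finite-dimensional subspace of $\cH$ spanned by them and by the finitely many functionals and directions entering $\Phi$, and run your argument with this adapted orthonormal system; the leftover part of $\Phi$ then contributes $-\int_X u\,\hat g\,\phi\,d\gga$ with $\hat g$ centered Gaussian and independent of $(u,\phi)$, which vanishes exactly by the independence argument you invoke --- but it should be said, since as written the claim ``every scalar profile depends only on $\Pi_n$'' is not available. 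Second, the integrability issue you flag at the end is genuine: for $u$ merely in $L^1_\gga(X)$ the pairing of $u$ with the linear-growth part $\hath_j\Phi_j$ of $\dive_\gga\Phi$ need not converge absolutely, so the cut-off removal and the term-by-term treatment of the divergence require either the restriction to the case $v\in BV_{\gga_m}(\R^m)$ (where, as you say, the embedding into $L\log^{1/2}L$ and its duality with $\exp(L^2)$ make everything summable --- note the paper itself records this subtlety in the remark following Proposition \ref{ornstein}), or an a priori reduction to bounded $v$ by truncation combined with the coarea formula. With these two points spelled out, the proof is complete and is the standard argument; it also parallels the $\Gamma$-limit/conditional-expectation mechanism the paper uses elsewhere (the operator $\bEm$ and Proposition \ref{relax}).
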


\begin{prop}[Coarea formula \cite{AFP}]\label{procoarea}
If $u\in BV_\gga(X)$ then for every Borel set $B\subset X$, 
\begin{equation}\label{coarea}|D_\gga u|(B)=\int_{\R} \Pgamma{\{u>t\},B} \, dt.\end{equation}
\end{prop}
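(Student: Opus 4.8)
The plan is to follow the pattern of the Euclidean proof (see \cite{AFP}): first reduce to the finite–dimensional, Gaussian–weighted coarea formula by means of the cylindrical approximations $\bEm u$, and then pass to the limit using lower semicontinuity of the perimeter together with the description of $|D_\gga u|$ as a relaxed functional. \emph{Step 1: cylindrical functions.} I would first prove \eqref{coarea} when $u=v\circ\Pi_m$ is cylindrical, with $v\in BV_{\gga_m}(\R^m)$. Since $\gga_m$ has a smooth strictly positive density $\gr_m$ with respect to Lebesgue measure, $v$ lies in $BV_{\gga_m}(\R^m)$ exactly when it is locally of bounded variation with $\int_{\R^m}\gr_m\,d|Dv|<+\infty$, and the weighted coarea identity
\[
\int_\R \Pgamma{\{v>t\},A'}\,dt \;=\; \int_{A'}\gr_m\, d|Dv|
\]
for Borel $A'\subseteq\R^m$ follows from the classical Euclidean coarea formula $|Dv|=\int_\R |D\chi_{\{v>t\}}|\,dt$ (an identity of measures) by integrating the weight $\gr_m$ and invoking Tonelli's theorem. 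Using the product structure $\gga=\gga_m\otimes\gga_m^\perp$ coming from $X\cong\R^m\oplus X_m^\perp$, together with the identity $|D_\gga u|(X)=|D_{\gga_m}v|(\R^m)$ recalled above (in its localized form), one transfers this to $X$ and obtains \eqref{coarea} for cylindrical $u$ and every Borel $B\subseteq X$.

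\emph{Step 2: the inequality $\int_\R \Pgamma{\{u>t\},X}\,dt\le |D_\gga u|(X)$.} For a general $u\in BV_\gga(X)$ set $u_m:=\bEm u$. These functions are cylindrical, $u_m\to u$ in $L^1_\gga(X)$, and testing the definition of the total variation only against cylindrical vector fields depending on the first $m$ variables gives $|D_\gga u_m|(X)\le |D_\gga u|(X)$. Since $\int_X|u_m-u|\,d\gga=\int_\R \|\chi_{\{u_m>t\}}-\chi_{\{u>t\}}\|_{L^1_\gga}\,dt\to0$, along a subsequence one has $\chi_{\{u_m>t\}}\to\chi_{\{u>t\}}$ in $L^1_\gga(X)$ for a.e.\ $t$; by lower semicontinuity of the perimeter, Fatou's lemma and Step 1,
\[
\int_\R \Pgamma{\{u>t\},X}\,dt \;\le\; \liminf_m \int_\R \Pgamma{\{u_m>t\},X}\,dt \;=\;\liminf_m |D_\gga u_m|(X)\;\le\; |D_\gga u|(X)\;<\;+\infty .
\]
In particular $\{u>t\}$ has finite $\gga$–perimeter for a.e.\ $t\in\R$.

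\emph{Step 3: identification of $D_\gga u$ and conclusion.} Using the layer–cake formula $u=\int_0^{+\infty}\chi_{\{u>t\}}\,dt-\int_{-\infty}^0(1-\chi_{\{u>t\}})\,dt$ and the finiteness just obtained, I would define the $H$–valued measure $\nu:=\int_\R D_\gga\chi_{\{u>t\}}\,dt\in\cM(X,H)$ and check that $\nu=D_\gga u$ by testing componentwise against $\Phi\in\FCb(X,H)$ through the characterization in Theorem \ref{defBV}; the exchange of $\int_X$ and $\int_\R$ is legitimate because $\int_\R \Pgamma{\{u>t\},X}\,dt<+\infty$ and $\int_X\dive_\gga\Phi\,d\gga=0$. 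Then $|D_\gga u|(B)=|\nu|(B)\le\int_\R \Pgamma{\{u>t\},B}\,dt$ for every Borel $B$; specializing to $B=X$ and comparing with Step 2 yields $|D_\gga u|(X)=\int_\R \Pgamma{\{u>t\},X}\,dt$. Since a finite nonnegative measure dominated by another finite nonnegative measure of equal total mass must coincide with it, \eqref{coarea} follows for all Borel $B\subseteq X$.

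I expect the main obstacle to be Step 1, namely carrying the weighted Euclidean coarea formula across the factorization $\gga=\gga_m\otimes\gga_m^\perp$ in a way valid for arbitrary (possibly non–cylindrical) Borel sets $B$, rather than only cylindrical ones. The remaining delicate points are the lower semicontinuity of $\Pgamma{\cdot}$ for $L^1_\gga$–convergence and the Fubini exchange in Step 3: both must be used in a form insensitive to the unboundedness of $\dive_\gga\Phi$, but both are by now standard in the $BV_\gga$ theory of \cite{AMMP,AF}.
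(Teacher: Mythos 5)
The paper itself does not prove this proposition: it is quoted from the literature (the citation \cite{AFP} indicating that the Euclidean argument carries over; see also \cite{AMMP}), so your proposal has to be judged on its own. The architecture you choose is the standard one and is sound: the weighted finite-dimensional coarea identity for cylindrical functions, the contraction $|D_\gga \bEm u|(X)\le|D_\gga u|(X)$ plus lower semicontinuity and Fatou for the inequality $\int_\R \Pgamma{\{u>t\},X}\,dt\le|D_\gga u|(X)$, and the layer-cake identification of $D_\gga u$ with $\nu=\int_\R D_\gga\chi_{\{u>t\}}\,dt$ followed by the ``two finite measures with equal mass, one dominating the other'' argument to pass from $B=X$ to arbitrary Borel $B$. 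One point you flag yourself deserves emphasis: in Step 1 what you actually need for non-cylindrical $B$ is the localized product identity $D_\gga(v\circ\Pi_m)=D_{\gga_m}v\otimes\gga_m^\perp$ (and the analogous one for the level sets); the proposition recalled in the paper only gives equality of total variations on all of $X$, so this identification must be checked by testing against fields depending on the first $n\ge m$ variables and using $\gga=\gga_n\otimes\gga_n^\perp$. It is routine, but it is a step, not a citation.

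The genuine gap is the justification of the Fubini exchange in Step 3. Finiteness of $\int_\R \Pgamma{\{u>t\},X}\,dt$ together with $\int_X\dive_\gga\Phi\,d\gga=0$ controls the iterated integral in one order only; to interchange $\int_\R$ and $\int_X$ in $\int_\R\int_X\chi_{\{u>t\}}\dive_\gga\Phi\,d\gga\,dt$ (even after the usual correction for $t<0$) you need joint absolute integrability, that is $\int_X|u|\,\normH{\Phi}_\infty$-type bounds are not enough and what is required is $\int_X|u|\,|\dive_\gga\Phi|\,d\gga<+\infty$. This is \emph{not} automatic for $u\in L^1_\gga(X)$, because $\dive_\gga\Phi$ contains the unbounded terms $\hath_j[\Phi,h_j]_H$, so your stated reasons do not cover it. The fact is true for $u\in BV_\gga(X)$: it is already implicit in the integration-by-parts characterization of Theorem \ref{defBV} (which pairs $u$ with $\partial^*_{h_j}\Phi=\partial_{h_j}\Phi-\hath_j\Phi$) and can be deduced from the Gaussian isoperimetric inequality via the embedding of $BV_\gga(X)$ into the Orlicz space $L\log^{1/2}L$ (cf.\ \cite{AMMP,ledoux}); a convenient way to organize the proof is to establish the identity first for the truncations $u_N=\max(\min(u,N),-N)$, where Fubini on $[-N,N]\times X$ only uses $\dive_\gga\Phi\in L^1_\gga(X)$, and then let $N\to\infty$, using monotone convergence on the right-hand side, lower semicontinuity of the total variation on the left, and the integrability above to identify $\lim_N\int_X u_N\dive_\gga\Phi\,d\gga=\int_X u\,\dive_\gga\Phi\,d\gga$. (A last, minor, point you should record is the measurability in $t$ of $t\mapsto\Pgamma{\{u>t\},B}$ for fixed Borel $B$, needed to define $\nu$ and $\int_\R\Pgamma{\{u>t\},B}\,dt$.) With these repairs the argument is complete.
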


\noindent The following result can be found in \cite[Corollary 4.4.2]{boga}.
\begin{prop}\label{contconv}
Let $u$ be a convex function from $X$ to $\R\cup \{+\infty\}$, let $F(t)=\vgamma{ \{u\le t\}}$ and $t_0=\inf \{ t  : F(t)>0 \}$, 
then $F$ is continuous on $\R \backslash \{t_0\}$. As a consequence $\vgamma{\{u=t\}}=0$ for every $t\neq t_0$.
\end{prop}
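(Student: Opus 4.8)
The plan is to argue directly on the distribution function $F(t)=\gga(\{u\le t\})$, showing that $\Phi^{-1}\circ F$ is \emph{concave} on the interval where $0<F<1$, with $\Phi$ the standard one–dimensional Gaussian distribution function. Since a concave function on an open interval is continuous, this gives continuity of $F$ except possibly at the two endpoints of $\{0<F<1\}$, one of which turns out to be $t_0$; a short extra argument rules out the other one (or collapses it onto $t_0$). The reduction of the ``as a consequence'' part is then immediate: $F$ is non-decreasing, and it is right–continuous because $\{u\le t\}=\bigcap_{s>t}\{u\le s\}$, so every discontinuity of $F$ is an upward jump of size $F(t)-F(t^-)=\gga(\{u\le t\})-\gga(\{u<t\})=\gga(\{u=t\})$; hence $F$ continuous at $t$ is the same as $\gga(\{u=t\})=0$.

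The core step will use convexity of $u$ twice. First, for $t<s$, $\lambda\in(0,1)$, $a\in\{u\le t\}$ and $b\in\{u\le s\}$ one has $u(\lambda a+(1-\lambda)b)\le\lambda t+(1-\lambda)s$, so
\[
\lambda\{u\le t\}+(1-\lambda)\{u\le s\}\ \subseteq\ \{u\le\lambda t+(1-\lambda)s\}.
\]
Second, both $\{u\le t\}$ and $\{u\le s\}$ are convex, so Ehrhard's inequality for the Gaussian measure $\gga$ yields
\[
\Phi^{-1}\!\big(\gga(\lambda\{u\le t\}+(1-\lambda)\{u\le s\})\big)\ \ge\ \lambda\,\Phi^{-1}(F(t))+(1-\lambda)\,\Phi^{-1}(F(s)).
\]
Combining these and using that $\Phi^{-1}$ is nondecreasing shows that $G:=\Phi^{-1}\circ F$ satisfies $G(\lambda t+(1-\lambda)s)\ge\lambda G(t)+(1-\lambda)G(s)$, i.e.\ $G$ is concave, on the open interval $J:=\operatorname{int}\{t:0<F(t)<1\}$. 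Hence $G$, and therefore $F=\Phi\circ G$, is continuous on $J$.

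Finally I would treat the endpoints of $J=(a,b)$, where $a=\inf\{t:F(t)>0\}=t_0$ and $b=\inf\{t:F(t)=1\}\in(-\infty,+\infty]$. On $(-\infty,a)$ we have $F\equiv0$ and on $(b,+\infty)$ we have $F\equiv1$, so $F$ is continuous there; if $b=+\infty$ we are done. If $b<+\infty$, then $\gga(\{u\le b\})=1$, and applying Ehrhard's inequality once more with $s=b$ (so that $\Phi^{-1}(\gga(\{u\le b\}))=+\infty$) forces $\gga(\{u\le\lambda t+(1-\lambda)b\})=1$ for every $\lambda\in(0,1)$ and every $t$ with $F(t)>0$; this contradicts the definition of $b$ unless no such $t<b$ exists, i.e.\ unless $a=b=t_0$. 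In all cases the only possible discontinuity of $F$ lies at $t_0$, which is the assertion.

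The step I expect to be most delicate is invoking Ehrhard's inequality in this infinite–dimensional setting: one should either quote the Wiener-space version directly or deduce it from the Euclidean case via the canonical cylindrical approximation and the product decomposition $\gga=\gga_m\otimes\gga_m^\perp$, being slightly careful that the sublevel sets are only convex and measurable, not closed (so Minkowski sums should be handled with outer measure, or one first passes to finite-dimensional projections $\Pi_m$). The remaining ingredients — right–continuity of $F$, the Minkowski inclusion, and the endpoint discussion — are elementary.
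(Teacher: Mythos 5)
Your argument is correct, but it is worth pointing out that the paper does not prove this proposition at all: it is quoted directly from Bogachev \cite[Corollary 4.4.2]{boga}, so your proof should be compared with the standard argument given there rather than with anything in the text. Your route — Minkowski inclusion $\lambda\{u\le t\}+(1-\lambda)\{u\le s\}\subseteq\{u\le \lambda t+(1-\lambda)s\}$ plus Ehrhard's inequality, giving concavity of $\Phi^{-1}\circ F$ on $\{0<F<1\}$ — is sound, including the endpoint discussion at $b=\inf\{t:F(t)=1\}$, where Ehrhard with $\Phi^{-1}(1)=+\infty$ correctly forces either $b=+\infty$ or $a=b=t_0$; and your reduction of the ``consequence'' via right-continuity and jump size $\gga(\{u=t\})$ is fine. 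Two remarks. First, you can lighten the tool: the log-concavity of Gaussian measures, $\gga(\lambda A+(1-\lambda)B)\ge \gga(A)^{\lambda}\gga(B)^{1-\lambda}$ for convex $A,B$, already makes $\log F$ concave, hence finite and continuous, on the whole interval $(t_0,+\infty)$ where $F>0$; since $\log 1=0$ is finite, this removes the need for the separate analysis at $b$, and it is essentially the argument behind Bogachev's corollary. Second, the caveat you flag is the only genuinely delicate point: in infinite dimensions the Minkowski sum of two $\gga$-measurable convex sets need not be measurable, so one should either use the inner-measure form of Ehrhard/log-concavity (the sum is contained in the measurable set $\{u\le\lambda t+(1-\lambda)s\}$, so inner measure suffices for your chain of inequalities) or reduce to cylindrical sets via $\Pi_m$; both the infinite-dimensional Ehrhard inequality and the log-concavity statement are available in \cite{boga} and can simply be quoted, so this is a matter of care in the write-up rather than a gap.
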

 
\smallskip

\noindent In the finite dimensional setting, we will keep the same notations as in the infinite dimensional one. Notice that in $\R^m$, 
the following equality holds:
\[\dive_\gga \Phi= \dive \Phi -\sprod{x}{\Phi}.\]

\noindent We see that functions in $BV_{\gga_m}(\R^m)$ are in $BV_{\textrm{loc}}(\R^m)$ and that $D_{\gga_m} u= \gga_m Du$ so that most of the properties of classical $BV$ functions extend to $BV_{\gga_m}(\R^m)$ (see \cite{AFP}). \\

\noindent For $F:H \to \R$ a convex function we denote by $F^*$ its convex conjugate defined for $\Phi \in H$ by
\[F^*(\Phi) :=\sup_{h \in H} \ [\Phi,h]_H -F(h)\]
and by $F^\infty$ its recession function defined for $h \in H$ as:
\[F^\infty(h):=\lim_{t \to +\infty} \frac{F(th)}{t}.\]
For the main properties of these functions we refer to \cite{livrerockafellar}. The main assumptions we will make are:
\begin{itemize}
 \item[(H1)] $F: H\to \R\cup\{+\infty\}$ is a proper  (i.e. $F$ is not identically plus infinity) convex lower semi-continuous (l.s.c.), bounded from below and attains its minimum.
\item[(H2)] $F$ has $p\ge 1$ growth i.e. there exists $\alpha_1$, $\beta_1$, $\alpha_2$ and $\beta_2$  real positive such that
\[\alpha_1 \normH{h}^p+\beta_1\ge F(h)\ge \alpha_2 \normH{h}^p -\beta_2 \qquad \forall h \in H.\]
\end{itemize}
Notice that a convex function satisfying (H2) also satisfies (H1). We observe that hypothesis (H2) includes the limiting case $p=1$ which is of particular interest for us. Under hypothesis (H1), it is not restrictive to assume that  $F(0)=0$ and $F\ge 0$.\\

\noindent By Hahn-Banach Theorem, for every proper convex l.s.c.~function $F: H\to \R\cup\{+\infty\}$, there exists $q\in H$ such that $F'(h):= F(h)-[q,h]_H$ satisfies (H1).

\section{Representation formula and relaxation of integral functionals}\label{repr}
We extend in this section a representation formula for integral functionals. We start by proving it for functionals with linear growth. 
\begin{prop}\label{thmDemTem}
 Let $F : H \to \R$ be a convex function satisfying
\[\alpha |h|+\beta\ge F(h)\ge 0 \qquad \forall h \in H\]
For $\mu \in \cM(X,H)$, with $\mu=\mu^a \gga +\mu^s$ its Radon-Nikodym decomposition, let
\[\int_X F(\mu) := \int_X F(\mu^a) d\gga+\int_X F^{\infty}\left(\frac{d \mu^s}{d|\mu^s|} \right)d|\mu^s|,\]
then there holds
 \begin{equation}\label{eqmucyl}\int_X F(\mu)= \sup_{\Phi \in \FCb(X,H)} \int_X [\Phi, d\mu]_H -\int_X F^*(\Phi)d\gga,\end{equation}
\end{prop}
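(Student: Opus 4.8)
The plan is to prove the two inequalities in \eqref{eqmucyl} separately. The inequality ``$\ge$'', i.e.\ that the supremum does not exceed $\int_X F(\mu)$, is the soft one. Fix $\Phi\in\FCb(X,H)$ with $\int_XF^*(\Phi)\,d\gga<+\infty$ (otherwise there is nothing to check for this $\Phi$). Since $F$ is finite and has linear growth it is continuous and $F=F^{**}$, hence $\dom F^*\subseteq\{\,h\in H:\normH{h}\le\ga\,\}$ and $F^\infty$ is the support function of $\overline{\dom F^*}$. As $\Phi$ is continuous and $\gga$, being non-degenerate, has full topological support, the condition $\int_XF^*(\Phi)\,d\gga<+\infty$ forces $\Phi(x)\in\overline{\dom F^*}$ for \emph{every} $x\in X$, not merely $\gga$-a.e. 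Writing $\int_X[\Phi,d\mu]_H=\int_X[\Phi,\mu^a]_H\,d\gga+\int_X[\Phi,\frac{d\mu^s}{d|\mu^s|}]_H\,d|\mu^s|$, Young's inequality $[\Phi,\mu^a]_H\le F(\mu^a)+F^*(\Phi)$ bounds the first term by $\int_X(F(\mu^a)+F^*(\Phi))\,d\gga$, and the inequality $[\Psi,w]_H\le F^\infty(w)$ (valid for $\Psi\in\overline{\dom F^*}$) bounds the second by $\int_XF^\infty(\frac{d\mu^s}{d|\mu^s|})\,d|\mu^s|$; rearranging gives $\int_X[\Phi,d\mu]_H-\int_XF^*(\Phi)\,d\gga\le\int_XF(\mu)$.

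For the converse, denote by $\mathcal G(\mu)$ the right-hand side of \eqref{eqmucyl}. The idea is to reduce to finite dimensions and then run the classical (Euclidean) Demengel--Temam argument. Let $\mu^{(m)}:=(\Pi_m)_\sharp\mu\otimes\gga_m^\perp$ be the cylindrical approximation of $\mu$ (for $\mu=v\gga$ this is $(\bEm v)\gga$, so it extends $\bEm$ to measures). A direct computation gives $\int_X[\Phi,d\mu^{(m)}]_H=\int_X[\bEm\Phi,d\mu]_H$ for every $\Phi\in\FCb(X,H)$; since $\bEm$ is a conditional expectation, $\int_XF^*(\bEm\Phi)\,d\gga\le\int_XF^*(\Phi)\,d\gga$ by Jensen, and $\bEm\Phi\in\FCb(X,H)$, whence $\mathcal G(\mu^{(m)})\le\mathcal G(\mu)$. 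On the other hand $\mu^{(m)}\to\mu$ weakly-$*$, and since $|\mu^{(m)}|(X)\le|\mu|(X)$ the convergence is in fact strict; a Reshetnyak/Jensen-type argument for the functional $\nu\mapsto\int_X F(\nu)$ (which uses the linear growth of $F$, so that $F^\infty$ controls the blow-up of the densities) then gives $\int_XF(\mu^{(m)})\to\int_XF(\mu)$. It is therefore enough to establish ``$\le$'' for a cylindrical $\mu$, i.e.\ with $X,\gga$ replaced by $\R^m,\gga_m$, the $H$-values remaining a priori in all of $H$.

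In this finite-dimensional setting one argues as in the Euclidean case. Write $\mu=\mu^a\gga_m+\mu^s$, $\mu^s=\sigma|\mu^s|$, and fix $\eps>0$. Cover $\R^m$ up to an $\eps$-small error by finitely many Borel pieces on each of which $\mu^a$ is within $\eps$ of a fixed vector $w_i\in H$ and $\sigma$ within $\eps$ of a fixed $\sigma_i\in H$; on the $i$-th piece let $\phi$ be (approximately) a subgradient $q_i\in\partial F(w_i)$ where the density of $\mu$ with respect to $\gga_m$ is of order one, and an element $q_i'\in\dom F^*$ with $[q_i',\sigma_i]_H\ge F^\infty(\sigma_i)-\eps$ near the (thin) support of $\mu^s$; interpolate with a smooth partition of unity and smooth mildly. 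By hypothesis (H2) all of $q_i,q_i'$ satisfy $\normH{\cdot}\le\ga$, so $\phi$ is bounded and takes values in the finite-dimensional span of the (finitely many) $q_i$ and $q_i'$, hence $\phi\in\FCb(\R^m,H)$. Using the Fenchel identity $[q_i,w_i]_H-F^*(q_i)=F(w_i)$, the convexity of $F$ and $F^*$ (to control the interpolation zones), and the continuity of $F$ and of $F^\infty$, one obtains $\int[\phi,d\mu]_H-\int F^*(\phi)\,d\gga_m\ge\int_XF(\mu)-C\eps$; letting $\eps\to0$ gives $\mathcal G(\mu)\ge\int_XF(\mu)$.

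The main obstacle is the finite-dimensional construction, and within it the treatment of the singular part: the single smooth bounded field $\phi$ must be simultaneously a near-subgradient of $F$ on the absolutely continuous part and point along the recession directions $\sigma$ on the ($\gga_m$-negligible) support of $\mu^s$ --- exactly what the partition-of-unity gluing achieves --- while the reference measure being $\gga_m$ rather than Lebesgue is harmless only because $\gga_m$ has a smooth strictly positive density. A second, genuinely infinite-dimensional point is the Reshetnyak-type convergence $\int_XF(\mu^{(m)})\to\int_XF(\mu)$ used in the reduction; note also that, although the optimal competitor could a priori need infinitely many directions of $H$, the construction only ever uses the finitely many target vectors $q_i,q_i'$, so the resulting $\phi$ legitimately lies in $\FCb(X,H)$.
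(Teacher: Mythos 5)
Your first inequality (``$\sup\le\int_X F(\mu)$'') is fine, and the finite-dimensional gluing you sketch is in the spirit of Demengel--Temam. The genuine gap is in the reduction step, i.e.\ in the passage from $\mu$ to its cylindrical approximations $\mu^{(m)}$. You need the inequality $\varliminf_m \int_X F(\mu^{(m)})\ge \int_X F(\mu)$, and you claim it (indeed full convergence) from weak-$*$ convergence plus $|\mu^{(m)}|(X)\le|\mu|(X)$, invoking a ``Reshetnyak/Jensen-type argument''. This does not work as stated: Jensen for the conditional expectation gives the \emph{opposite} inequality $\int_X F(\mu^{(m)})\le\int_X F(\mu)$, and Reshetnyak continuity under mere strict convergence is simply false for integrands that are not positively $1$-homogeneous (the functional $\nu\mapsto\int F(\nu)$ with $F(p)=\sqrt{1+|p|^2}$ already fails to be continuous along oscillating sequences $\nu_n=(1+\cos(nx))\,dx$ on an interval, which converge strictly; continuity requires the strictly stronger ``area-strict'' convergence). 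The direction you actually need is lower semicontinuity of $\nu\mapsto\int_X F(\nu)$ with respect to (cylindrical) weak-$*$ convergence on the infinite-dimensional space $X$, and the standard proof of that lsc is precisely the representation formula you are trying to prove, so as written the step is either unjustified or circular. One can repair it (e.g.\ by showing $\int_X F(\mu^{(m)})$ is nondecreasing in $m$ and identifying its supremum with $\int_X F(\mu)$ via martingale convergence of the densities), but that is real work which the proposal does not contain.

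For comparison, the paper avoids any reduction to cylindrical measures: it proves the formula directly on $X$ with the supremum taken over simple fields $\Phi=\sum_i\chi_{A_i}h_i$ with $F^*(h_i)<+\infty$ (treating separately $\mu^a\gga$, then $\mu^s$ via $F^\infty$ and $(F^\infty)^*$, then gluing the two near-optimal fields using the regularity of the measures --- this gluing is also where your sketch is loosest, since ${\rm supp}\,\mu^s$ need not be ``thin'' and one must localize $\mu^s$ on a closed set with a $\gga$-small neighborhood), and only at the very end replaces simple fields by elements of $\FCb(X,H)$, writing $\Phi$ through the extreme points of the convex hull of its finitely many values and approximating the coefficients in $L^1_\gga\cap L^1_\mu$ by smooth cylindrical functions, using tightness of $\gga+|\mu|$ and the continuity of $F^*$ on that hull. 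That route never needs any semicontinuity or convergence statement for $\int_X F(\cdot)$ along approximating measures, which is exactly the point where your proposal breaks down. A further, more minor, caveat: even after your reduction the measure $(\Pi_m)_\sharp\mu$ is still $H$-valued, so the ``classical Euclidean'' finite-dimensional theorem cannot be cited verbatim and your construction would have to be carried out for $H$-valued densities (your remark that only finitely many vectors $q_i,q_i'$ occur does make this plausible, but it is not written).
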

where $[\Phi,d\mu]_H:= [\Phi, \frac{d\mu}{d|\mu|}]_H d|\mu|$.
\begin{proof}
 
For $\mu \in \cM(X,H)$, with $\mu=\mu^a \gga +\mu^s$ its Radon-Nikodym decomposition let  $\cD_F:=\{\Phi =\sum_{i=1}^n \chi_{A_i} h_i \,:\, n\in \N, \ A_i \textrm{ disjoint Borel sets, } h_i \in H, \ F^*(h_i) <+\infty\}$. Then we start by proving 
\begin{equation}\label{eqtilde}
\int_X F(\mu)=\sup_{\Phi\in \cD_F}  \int_X [\Phi, d\mu]_H -\int_X F^*(\Phi) \, d\gga .
\end{equation}
 The proof is adapted from \cite{DemTem} and is divided into three steps.

\smallskip

\noindent{\it Step 1.}
Let 
\[
M(\mu):=\sup_{\Phi\in \cD_F} \int_X [\Phi, d\mu]_H -\int_X F^*(\Phi) d\gga.
\]
We will show that for every $h \in L^1_\gga(X,H)$,
\begin{equation}\label{eqM}
M(h\gga)=\int_X F(h)d\gga.
\end{equation}
By definition of convex conjugate, it is readily checked that $M(h\gga)\le \int_X F(h)d\gga$.
We thus turn to the other inequality. By definition of the  integral, 
for every $\gd>0$, there exists $h_i \in H$ and $A_i \subset X$ with $A_i$ disjoints Borel sets and $i\in[1,n]$ such that if we set 
\[\theta= \sum_{i=1}^{n} \chi_{A_i} h_i\]
then $|\theta-h|_{L^1_\gga(X,H)} \le \gd$. As $F$ is of linear growth it is Lipschitz continuous and thus we can assume that also
\[|F(h)-F(\theta)|_{L^1_\gga(X)} \le \delta.\]

\noindent For every $i$, by definition of convex conjugate, there exists $\xi_i \in H$  such that
\[
F(h_i)\le [\xi_i, h_i]_H -F^*(\xi_i) +\delta.\]
 
\noindent Notice that since $F$ is of linear growth, the $\xi_i's$ are uniformly bounded. From this, setting $\Phi=\sum_{i=1}^n \chi_{A_i} \xi_i$ we have
\begin{align*}
\int_X F(h) d\gga &\le \int_X F(\theta) d\gga + \delta \\
										&= \sum_{i=1}^{n} \int_{A_i} F(h_i) d\gga + \delta\\
										&\le \sum_{i=1}^{n} \int_{A_i} [\xi_i, h_i]_H -F^*(\xi_i) d\gga +2\delta\\
					          &=\int_X [\Phi, \theta]_H -F^*(\Phi) d\gga + 2 \delta \\
						&\leq \int_X [\Phi, h]_H -F^*(\Phi) d\gga + C \delta\\
						&\le M(h)+ C\gd. 
										\end{align*}

\noindent Since $\delta$ is arbitrary we have $M(h\gga)=\int_X F(h)d\gga$.
\smallskip

\noindent{\it Step 2.} 
By reproducing the proof with $F^\infty$ instead of $F$, $\frac{d \mu^s}{d|\mu^s|}$ instead of $h$ and $|\mu^s|$ instead of $\gga$ we find, using that $\cD_{F^\infty}=\cD_F$ (since $\dom \ F^*=\dom \ (F^\infty)^*$ by \cite[Thm. 13.3]{livrerockafellar}) and $(F^\infty)^*=0$ in its domain,
\[M_\infty(\mu^s):=\sup_{\Phi \in \cD_F} \int_X [\Phi,d\mu^s]_H=\int_X F^\infty\left(\frac{d \mu^s}{d |\mu^s|}\right) d|\mu^s|.\]
\smallskip

\noindent{\it Step 3.} It remains to show that
\[
M(\mu^a \gga+\mu^s)=M(\mu^a \gga)+ M_\infty(\mu^s).
\] 
One inequality is easily obtained, since
\begin{align*}
M(\mu^a \gga+\mu^s)&=\sup_{\Phi\in \cD_F} \int_X [\Phi, \mu^a]_H d\gga +\int_X [\Phi, d\mu^s]_H-\int_X F^*(\Phi) d\gga\\
												&\le \left(\sup_{\Phi \in \cD_F} \int_X [\Phi, \mu^a]_H -F^*(\Phi)d\gga \right) + \left( \sup_{\Phi \in \cD_F} \int_X [\Phi,d\mu^s]_H\right)\\
												&=M(\mu^a \gga)+ M_\infty(\mu^s).
												\end{align*}
For the opposite inequality, let $\delta>0$ be fixed then there exists $\Phi_1$ and $\Phi_2$ such that
\begin{align*}
M(\mu^a \gga) &\le \int_X [\Phi_1, \mu^a]_H -F^*(\Phi_1)d\gga  + \delta\\
M_\infty(\mu^s)&\le \int_X [\Phi_2, d\mu^s]_H + \delta.
\end{align*}
Taking $\Phi$ equal to $\Phi_2$ on a sufficiently small neighborhood of the support of $\mu^s$ and equal to $\Phi_1$ outside this neighborhood, by the regularity of the measures $\mu^a \gga$ and $\mu^s$ we get
\begin{align*}
M(\mu^a\gga)+M_\infty(\mu^s)&\le\int_X [\Phi, \mu^a]_H -F^*(\Phi)d\gga  + \int_X [\Phi, d\mu^s]_H + C\delta\\
&\le M( \mu^a \gga+ \mu^s)+C\delta
\end{align*}
which gives the opposite inequality and shows \eqref{eqtilde}.\\

\noindent For $\Phi \in \cD_F$, the image of $\Phi$, being a finite number of vectors of $H$, is included in a finite dimensional vector space $V$ of $H$. If we now consider $K$ the convex hull of these vectors then $K$ is a convex polytope of $V$. We can then write $\Phi=\sum_{i=1}^N \theta_i \tilde h_i$ with $\tilde h_i$ the extremal points of $K$ and $\theta_i \in L^1_\gga(X)\cap L^1_\mu(X)$ with $\theta_i \ge 0$ and $\sum_{i=1}^N \theta_i \le 1$. Arguing as in \cite[Section 2.1]{AMMP}, $\gga +|\mu|$ being tight we can approximate $\theta_i$ in  $L^1_\gga(X)\cap L^1_\mu(X)$ with $ \theta^k_i \in \FCb(X)$ in such a way that $\theta_i^k\ge 0$ and $\sum_{i=1}^{N} \theta_i^k \le 1$. As $F^*$ is bounded and continuous on $K$, letting $\Phi^k:= \sum_{i=1}^N \theta_i^k \tilde h_i$ we have $\Phi^k \in \cD_F$ and  
\[\lim_{k \to +\infty} \int_X [\Phi^k,d\mu]-\int_X F^*(\Phi^k) d\gga = \int_X [\Phi,d\mu]-\int_X F^*(\Phi) d\gga.\]
\end{proof}

\noindent We then deduce the following corollary:
\begin{thm}\label{corDemTem}For $F : H \to \R\cup \{+\infty\}$  a proper l.s.c.~convex function and $\mu \in \cM(X,H)$, with $\mu=\mu^a \gga +\mu^s$, then again 
 \[
\int_X F(\mu)= \sup_{\Phi \in \FCb(X,H)} \int_X [\Phi, d\mu]_H -\int_X F^*(\Phi)d\gga.
 \]
\end{thm}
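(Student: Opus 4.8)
The plan is to bootstrap from Proposition \ref{thmDemTem}, which already gives the identity for $F$ of linear growth, to a general proper l.s.c.\ convex $F$ by an approximation argument on the level of the integrands. First I would reduce to the case $F\ge 0$ with $F(0)=0$: by the Hahn–Banach remark at the end of Section \ref{not}, replacing $F$ by $F'(h)=F(h)-[q,h]_H$ for suitable $q\in H$ shifts both sides of the claimed identity by the same affine term $\int_X[q,d\mu]_H$ (note $(F')^*(\Phi)=F^*(\Phi+q)$ and the supremum over $\Phi\in\FCb(X,H)$ is unchanged under the translation $\Phi\mapsto\Phi+q$, using that constants in $H$ lie in $\FCb(X,H)$), and similarly $(F')^\infty(h)=F^\infty(h)-[q,h]_H$ on the recession part. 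So assume $F\ge 0$, $F(0)=0$.

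**Approximation of $F$ by Lipschitz functions.** Next I would set $F_n(h):=\inf_{k\in H}\big(F(k)+n|h-k|_H\big)$, the Moreau–Yosida/inf-convolution of $F$ with $n|\cdot|_H$. Each $F_n$ is convex, $n$-Lipschitz, satisfies $0\le F_n(h)\le n|h|_H$ (so it has linear growth), and $F_n\uparrow F$ pointwise as $n\to\infty$ by lower semicontinuity of $F$. Their conjugates satisfy $F_n^*(\Phi)=F^*(\Phi)$ whenever $|\Phi|_H\le n$ and $F_n^*(\Phi)=+\infty$ otherwise; hence $F_n^*\downarrow F^*$ pointwise. For the recession functions one checks $F_n^\infty(h)=\min(F^\infty(h),n|h|_H)\uparrow F^\infty(h)$. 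Applying Proposition \ref{thmDemTem} to each $F_n$ gives
\[
\int_X F_n(\mu^a)\,d\gga+\int_X F_n^\infty\!\left(\frac{d\mu^s}{d|\mu^s|}\right)d|\mu^s|
=\sup_{\Phi\in\FCb(X,H)}\int_X[\Phi,d\mu]_H-\int_X F_n^*(\Phi)\,d\gga.
\]

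**Passing to the limit.** On the left-hand side, monotone convergence ($F_n\uparrow F$, $F_n^\infty\uparrow F^\infty$, both increasing sequences of nonnegative functions) yields convergence to $\int_X F(\mu^a)\,d\gga+\int_X F^\infty(d\mu^s/d|\mu^s|)\,d|\mu^s|=\int_X F(\mu)$. For the right-hand side, the inequality $\le$ is immediate for each $n$ since $F_n^*\ge F^*$ forces the $n$-th supremum to be $\le\sup_\Phi\int[\Phi,d\mu]_H-\int F^*(\Phi)$; and since the $n$-th suprema are nondecreasing in $n$ (as $F_n^*$ decreases), their limit is $\le$ the target supremum. For the reverse, fix $\Phi\in\FCb(X,H)$ with $\int_X F^*(\Phi)\,d\gga<\infty$; then $\Phi$ is bounded, say $|\Phi|_H\le n_0$, so $F_n^*(\Phi)=F^*(\Phi)$ for all $n\ge n_0$, whence $\int_X[\Phi,d\mu]_H-\int_X F^*(\Phi)\,d\gga$ is $\le$ the $n$-th supremum for every $n\ge n_0$, and therefore $\le\lim_n(\text{$n$-th sup})$. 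Taking the supremum over such $\Phi$ gives $\ge$, which closes the argument.

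**Main obstacle.** The only delicate point is checking that the conjugate and recession functions of the inf-convolution $F_n$ behave as claimed, in particular the identity $F_n^\infty=\min(F^\infty,n|\cdot|_H)$ and its monotone convergence to $F^\infty$; this relies on the standard correspondence between recession functions and the support-function behaviour of $\dom F^*$ (cf.\ \cite[Thm.\ 13.3]{livrerockafellar}), already invoked in the proof of Proposition \ref{thmDemTem} to identify $\cD_{F^\infty}=\cD_F$. A minor subtlety is the reduction to $F\ge 0$: one must ensure that if $\dom F^*=\emptyset$ then the statement is vacuous/trivial, but the Hahn–Banach remark guarantees we may always reduce to a genuinely proper situation where $F^*$ is proper as well. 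Everything else is monotone convergence, so no uniformity in $\Phi$ is needed.
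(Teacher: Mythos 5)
Your proposal follows essentially the same route as the paper's proof: the paper's approximation $F_n(p)=\sup_{\normH{\Phi}\le n}\big([\Phi,p]_H-F^*(\Phi)\big)$ is exactly the inf-convolution you use (its conjugate is $F^*$ plus the indicator of the ball $\{\normH{\Phi}\le n\}$, so the two definitions coincide once $F$ is normalized), the passage to the limit is the same monotone-convergence argument on the left combined with the monotonicity of the truncated suprema on the right, and the affine Hahn--Banach reduction is the paper's Case~2, merely performed at the start instead of at the end. Two remarks. First, the identity $F_n^\infty=\min(F^\infty,n\normH{\cdot})$ that you claim is false in general: since $F_n$ is finite, Lipschitz and convex, $F_n^\infty$ is the support function of $\dom F_n^*=\dom F^*\cap \{\normH{\Phi}\le n\}$, and the support function of an intersection is in general strictly smaller than the minimum of the two support functions (it is the l.s.c.\ envelope of their inf-convolution). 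For instance in $H=\R^2$ take $F$ to be the support function of the half-plane $\{x\le 0\}$, i.e.\ $F(h)=0$ if $h\in\R_+\times\{0\}$ and $F(h)=+\infty$ otherwise; then $F_n(h)=n\,\operatorname{dist}(h,\R_+\times\{0\})$ is positively one-homogeneous, so $F_n^\infty(1,1)=n$, whereas $\min\big(F^\infty(1,1),n\normH{(1,1)}\big)=n\sqrt{2}$. This does not break your proof, because the only property you actually use is the monotone convergence $F_n^\infty\uparrow F^\infty$, which does hold by the support-function description you invoke: $F_n^\infty$ is the support function of $\dom F^*\cap\{\normH{\Phi}\le n\}$, which increases to the support function of $\dom F^*$, namely $F^\infty$ (this is precisely how the paper argues, via $\Phi\in\dom F^*$ and $n\ge\normH{\Phi}$). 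You should therefore replace the claimed identity by this statement.

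Second, a very minor point on the normalization: the Hahn--Banach remark only gives an affine correction making $F'$ bounded below with attained minimum; to reach $F\ge 0$ with $F(0)=0$ you also subtract a constant and translate the variable by a minimum point $h_0$ of $F'$. The constant is harmless, and the translation amounts to proving the identity for the measure $\mu+h_0\gga\in\cM(X,H)$ instead of $\mu$ (since $(F'(\cdot+h_0))^*(\Phi)=(F')^*(\Phi)-[\Phi,h_0]_H$ and the recession part is unchanged); as the statement is proved for arbitrary $\mu$, this is fine, but it deserves a line, and it is the reason the bound $F_n(h)\le n\normH{h}$ you use is legitimate.
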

\begin{proof}
\noindent \textit{Case 1.} First assume that (H1) holds. For $n \in\N$ let 
\[F_n(p):=\sup_{\normH{\Phi}\le n} [\Phi,p]_H-F^*(\Phi).\]
Then $F_n$ is of linear growth and $F_n$ is a nondecreasing sequence converging pointwise to $F$ and thus by the monotone convergence theorem,
\[\int_X F(\mu^a) \, d\gga =\lim_{n\to \infty} \int_X F_n (\mu^a)\, d\gga.\]
 Analogously, $(F_n)^\infty$ converges monotonically to $F^\infty$. Indeed, since $F_n$ is nondecreasing, $(F_n)^\infty$ is clearly nondecreasing and
\[(F_n)^\infty(p)=\lim_{t \to +\infty} \frac{F_n(tp)}{t} \le \lim_{t \to +\infty} \frac{F(tp)}{t}=F^\infty(p).\]
On the other hand, for every $\Phi \in \dom \ F^*=\dom \ (F^\infty)^*$, if $n \ge \normH{\Phi}$, for every $p \in H$ and $t>0$,
\[\frac{F_n(tp)}{t} \ge [\Phi,p]_H-\frac{F^*(\Phi)}{t}\]
and thus letting $t$ goes to infinity and then $n$ goes to infinity as well, we find
\[\lim_{n \to \infty} (F_n)^\infty(p) \ge \sup_{\Phi \in \dom F^*} [\Phi,p]_H=F^\infty(p).\]
We thus have
\[\int_X F\left(\frac{d \mu^s}{d|\mu^s|} \right)\, d|\mu^s| =\lim_{n\to \infty} \int_X F_n \left(\frac{d \mu^s}{d|\mu^s|} \right)\, d|\mu^s|.\]

 \noindent By Proposition \ref{thmDemTem}, for every $n \in \N$,
\begin{equation}\label{supn} \int_X F_n (\mu^a)d\gga+\int_X F_n \left(\frac{d \mu^s}{d|\mu^s|} \right)d|\mu^s|= \sup_{\stackrel{\Phi \in \FCb(X,H)}{|\Phi|_\infty \le n}} \int_X [\Phi,d\mu] -\int_X F^*(\Phi) \, d\gga.\end{equation}
Passing to the limit when $n$ tends to infinity we get
\[\int_X F(\mu) =\sup_{\Phi \in \FCb(X,H)} \int_X [\Phi,d\mu] -\int_X F^*(\Phi) \, d\gga.\]

\noindent\textit{Case 2.} Let now $F$ be a generic proper l.s.c.~convex function and  $q\in H$ be such that $F'(h):=F(h)-[q,h]$ satisfies (H1). It is readily seen that $(F')^\infty(h)=F^\infty(h) -[q,h]_H$ and $(F')^*(\Phi)=F^*(\Phi+q)$. Since \eqref{eqtilde} holds for $F'$,
\begin{align*}
 \int_X F(\mu)-\int_X [q,d\mu]_H&=\int_X F'(\mu) \\
&=\sup_{\Phi \in \FCb(X,H)} \int_X [\Phi, d\mu]_H -\int_X F^*(\Phi+q)d\gga\\
	&=\sup_{\Phi \in \FCb(X,H)} \int_X [\Phi-q, d\mu]_H -\int_X F^*(\Phi)d\gga\\
	&=\sup_{\Phi \in \FCb(X,H)} \left\{\int_X [\Phi, d\mu]_H -\int_X F^*(\Phi)d\gga\right\}-\int_X [q,d\mu]_H.
\end{align*}

\end{proof}

\begin{remarque}\rm
 An important example of functionals covered by the Theorem is given by the functionals with $p\ge1$ growth.
\end{remarque}

\noindent For $F$ a proper l.s.c.~convex function, we can define the functional on $\Ldeu$
\begin{equation}\label{repsupF}
 \int_X F(D_\gga u):=\sup_{\Phi \in \FCb(X,H)} \int_X -u \dive_\gga \Phi - F^*(\Phi) \ d\gga.
\end{equation}
\noindent The functional defined in this way is thus l.s.c.~in $\Ldeu$.  By  \eqref{eqmucyl}, we have

\begin{equation}\label{equivF}\int_X F(D_\gga u)= \int_X F(\nabla u) d\gga+ \int_X F^\infty\left(\frac{d D^s_\gga u}{d|D^s_\gga u|}\right) d|D^s_\gga u| \end{equation}
for $u \in BV_\gga(X)$ with $D_\gga u =\nabla u \gga +D^s_\gga u$ its Radon-Nikodym decomposition. \\

 \noindent For $Y$ a metric space and $F:Y\to \R$, we define the relaxed functional $\bar F$ of $F$ by
\[\bar F(x):= \inf_{x_n\to x} \varliminf_{n\to\infty} F(x_n)\,.
\] 
\noindent We then have the following relaxation result:
\begin{prop}\label{relax}
 Let $F$ be a proper l.s.c.~convex function then the functional $\int_X F(D_\gga u)$ is the relaxation of the functional defined as $\int_X F(\nabla_H u) d\gga$ for $u \in W_\gga^{1,1}(X)$. If $F$ satisfies also (H2) then is is also the relaxation of the functional $\int_X F(\nabla_H u) d\gga$ defined on the smaller class $ \FCb(X)$.
\end{prop}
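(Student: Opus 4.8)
The plan is to prove the two relaxation statements essentially by combining the representation formula of Theorem~\ref{corDemTem} with a diagonal argument using the Ornstein--Uhlenbeck semigroup. Write $\mathcal{G}(u):=\int_X F(D_\gga u)$ for the functional defined by \eqref{repsupF}, and let $\mathcal{G}_0$ be the functional equal to $\int_X F(\nabla_H u)\,d\gga$ on $W^{1,1}_\gga(X)$ and $+\infty$ elsewhere (respectively $\mathcal{G}_1$ for the restriction to $\FCb(X)$). Denote by $\overline{\mathcal{G}_0}$, $\overline{\mathcal{G}_1}$ their relaxations in $\Ldeu$. Since $\mathcal{G}$ is convex and l.s.c.\ in $\Ldeu$ (as a supremum of continuous affine functionals, by \eqref{repsupF}) and agrees with $\mathcal{G}_0$ on $W^{1,1}_\gga(X)$ by \eqref{integpart} and the definition of convex conjugate (for $u\in W^{1,1}_\gga(X)$, $\int_X -u\dive_\gga\Phi = \int_X [\nabla_H u,\Phi]_H$, and taking the sup over $\Phi$ gives exactly $\int_X F(\nabla_H u)\,d\gga$ by Step~1 of Proposition~\ref{thmDemTem}), we immediately get $\mathcal{G}\le\overline{\mathcal{G}_0}$. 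The whole content is therefore the reverse inequality $\overline{\mathcal{G}_0}\le\mathcal{G}$: given $u\in\Ldeu$ we must produce a sequence $u_n\in W^{1,1}_\gga(X)$ with $u_n\to u$ in $\Ldeu$ and $\limsup_n\int_X F(\nabla_H u_n)\,d\gga\le\mathcal{G}(u)$.

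For this I would take $u_n:=T_{t_n}u$ for a sequence $t_n\downarrow0$. By Proposition~\ref{ornstein}, $T_{t}u\in W^{1,1}_\gga(X)$ whenever $u\in L^p_\gga$ with $p>1$; for general $u\in\Ldeu$ this is fine since $\Ldeu=L^2_\gga\subset L^p_\gga$ is not needed — we only need $p=2>1$. Also $T_{t_n}u\to u$ in $\Ldeu$. The key estimate is $\mathcal{G}(T_tu)\le\mathcal{G}(u)$, which follows from the representation \eqref{repsupF}: for any $\Phi\in\FCb(X,H)$, by \eqref{integpartOU} we have $\int_X -T_tu\,\dive_\gga\Phi\,d\gga = -e^{-t}\int_X u\,\dive_\gga T_t\Phi\,d\gga$, and since $e^{-t}\le1$ and $F^*\ge0$ (recall $F\ge0$, $F(0)=0$ so $F^*\ge0$) we can bound $e^{-t}(-\int u\dive_\gga T_t\Phi) - \int F^*(\Phi)\,d\gga \le -\int u\dive_\gga T_t\Phi\,d\gga - e^{-t}\int F^*(\Phi)\,d\gga$; here one uses \eqref{jensF} to replace $\int F^*(\Phi)\,d\gga$ by $\int F^*(T_t\Phi)\,d\gga$ after accounting for the factor $e^{-t}$ — more precisely, writing $\Psi:=e^{-t}T_t\Phi\in\FCb(X,H)$ (using that $T_t\Phi\in\FCb(X,H)$) and using convexity of $F^*$ with $F^*(0)=0$ to get $F^*(e^{-t}T_t\Phi)\le e^{-t}F^*(T_t\Phi)$, then \eqref{jensF} gives $\int_X F^*(e^{-t}T_t\Phi)\,d\gga\le e^{-t}\int_X F^*(T_t\Phi)\,d\gga\le e^{-t}\int_X F^*(\Phi)\,d\gga$. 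Combining, $\int_X -u\dive_\gga\Psi - F^*(\Psi)\,d\gga \ge e^{-t}\big(\int_X -u\dive_\gga T_t\Phi - F^*(\Phi)\,d\gga\big) = \int_X -T_tu\,\dive_\gga\Phi\,d\gga - e^{-t}\int_X F^*(\Phi)\,d\gga \ge \int_X -T_tu\,\dive_\gga\Phi - F^*(\Phi)\,d\gga$; taking sup over $\Phi$ on the right and noting $\Psi$ ranges in a subset of $\FCb(X,H)$ yields $\mathcal{G}(T_tu)\le\mathcal{G}(u)$. Finally, since $T_{t_n}u\in W^{1,1}_\gga(X)$, $\mathcal{G}(T_{t_n}u)=\int_X F(\nabla_H T_{t_n}u)\,d\gga$ by \eqref{equivF}, so $\limsup_n\int_X F(\nabla_H T_{t_n}u)\,d\gga\le\mathcal{G}(u)$ as required.

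For the second assertion, under (H2) with $p\ge1$ one must upgrade from $W^{1,1}_\gga(X)$ to $\FCb(X)$. Here I would take $v_{n,m}:=\bEm(T_{t_n}u)$, the cylindrical truncation; since $T_{t_n}u\in W^{1,1}_\gga(X)$ (indeed in $W^{1,p}_\gga$ when $u\in L^q_\gga$ for suitable $q$, which holds as $u\in\Ldeu$ and actually one may first mollify to land in all $L^q$), $\bEm(T_{t_n}u)$ is cylindrical and smooth, and $\nabla_H\bEm\,w=\bEm(\nabla_H w)$ componentwise on $H_m$, so by Jensen's inequality (conditional expectation is an averaging) $\int_X F(\nabla_H\bEm(T_{t_n}u))\,d\gga\le\int_X F(\nabla_H T_{t_n}u)\,d\gga$, and $\bEm(T_{t_n}u)\to T_{t_n}u$ in $\Ldeu$ as $m\to\infty$. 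A further mollification in the finite-dimensional variables makes these truly $\FCb(X)$ without increasing the energy (again Jensen), with (H2) ensuring the energies stay finite and controlled. A diagonal extraction then gives a sequence in $\FCb(X)$ converging to $u$ in $\Ldeu$ with $\limsup$ of the energies $\le\mathcal{G}(u)$.

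The main obstacle is the regularity bookkeeping in the chain $u\rightsquigarrow T_{t_n}u\rightsquigarrow\bEm(\cdot)\rightsquigarrow$ finite-dimensional mollification: one must check at each stage that the object lands in the right Sobolev/smoothness class (this is where the hypothesis $p>1$ in Proposition~\ref{ornstein}, and possibly an initial regularization of $u$, is used) and that the functional $\int_X F(\nabla_H\cdot)\,d\gga$ does not increase — the latter being a repeated application of Jensen's inequality, for which convexity of $F$ and, in the infinite-dimensional step, the rotation invariance of $\gga$ (as already invoked for \eqref{jensF}) are the crucial inputs. The l.s.c.\ and convexity of $\mathcal{G}$ together with the inequality $\mathcal{G}\le\overline{\mathcal{G}_1}\le\overline{\mathcal{G}_0}$ then close the argument, since we will have shown $\overline{\mathcal{G}_0}\le\mathcal{G}$ and hence all three coincide.
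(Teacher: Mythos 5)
Your argument for the first assertion is essentially the paper's: the inequality $\mathcal{G}\le\overline{\mathcal{G}_0}$ from lower semicontinuity of the sup in \eqref{repsupF}, and the recovery sequence $T_{t_n}u$ via \eqref{integpartOU}, \eqref{jensF} and \eqref{equivF}. But it has a gap in generality: you invoke ``$F\ge0$, $F(0)=0$, so $F^*\ge0$'' and ``convexity of $F^*$ with $F^*(0)=0$''. That normalization is harmless only under (H1); the proposition is stated for an arbitrary proper l.s.c.\ convex $F$, which may be unbounded from below (e.g.\ contain a nontrivial linear part). In that case $F^*(0)=-\inf F=+\infty$, so both your bound $F^*(e^{-t}T_t\Phi)\le e^{-t}F^*(T_t\Phi)$ and the step $e^{-t}\int_X F^*(\Phi)\,d\gga\le\int_X F^*(\Phi)\,d\gga$ collapse. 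The paper closes this with a separate reduction that you omit: choose $q\in H$ (Hahn--Banach) so that $F'(h)=F(h)-[q,h]_H$ satisfies (H1); since $\int_X[q,dD_\gga u]_H=-\int_X u\,\dive_\gga q\,d\gga$ is a continuous linear functional on $\Ldeu$, it passes through the relaxation, and the (H1) case yields the general one. This generality is actually needed later (Case 2 of Theorem \ref{convlinear}), so the reduction must appear.

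For the second assertion your key inequality is not correct as stated. For $w\in W^{1,1}_\gga(X)$ one has $\nabla_H(\bEm w)=\Pi_{H_m}\bigl(\bEm(\nabla_H w)\bigr)$, where $\Pi_{H_m}$ is the orthogonal projection of $H$ onto $H_m$: the components of $\bEm(\nabla_H w)$ along $h_i$, $i>m$, need not vanish and are simply discarded. Jensen's inequality controls $\int_X F(\bEm\nabla_H w)\,d\gga$, not $\int_X F(\Pi_{H_m}\bEm\nabla_H w)\,d\gga$, and the monotonicity $\int_X F(\nabla_H\bEm w)\,d\gga\le\int_X F(\nabla_H w)\,d\gga$ fails for general convex $F$ (already in finite dimension, $F(h)=\bigl([h,h_1+h_{m+1}]_H\bigr)^2$ gives a counterexample). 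The same caveat applies to ``a further mollification \dots without increasing the energy (again Jensen)'': convolution does not commute with the Gaussian weight $\gga_m$. Neither defect is fatal under (H2), but the repair is precisely the paper's argument: one does not need the energy to decrease, only to converge, and this follows from the density of $\FCb(X)$ in $W^{1,p}_\gga(X)$ together with the upper bound $F(h)\le\alpha\normH{h}^p+\beta$ and dominated convergence (your chain $u\rightsquigarrow T_{t_n}u\rightsquigarrow\bEm T_{t_n}u$ can be salvaged the same way, using $\normH{\Pi_{H_m}\bEm\nabla_H w}\le\normH{\bEm\nabla_H w}$ and $L^p$ martingale convergence, provided you first ensure $T_{t_n}u\in W^{1,p}_\gga(X)$, e.g.\ by truncating $u$). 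Finally, the chain in your closing sentence is reversed: trivially $\mathcal{G}\le\overline{\mathcal{G}_0}\le\overline{\mathcal{G}_1}$, and what the cylindrical construction must deliver is $\overline{\mathcal{G}_1}\le\mathcal{G}$.
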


\begin{proof}
 \textit{Case 1.} Assume first that $F$ satisfies (H1). We start by proving that
\begin{equation}\label{relaxW11}
\int_X F(D_\gga u) = \inf \varliminf \left\{\int_X F(\nabla_H u_n) \, d\gga \,:\,
u_n \in W^{1,1}_\gga(X)\,, \quad u_n \to u \textrm{ in } \Ldeu \right\}.\end{equation}
 Thanks to Proposition \ref{thmDemTem}, the inequality `$\le$' is obvious. To prove the opposite inequality, we proceed as in \cite[Th.~4.1]{AMMP} by using the Ornstein-Uhlenbeck semigroup. For $u \in \Ldeu$ and $t>0$, thanks to Proposition \ref{ornstein},
\begin{align*}
 \int_X F(D_\gga T_t u) &= \sup_{\Phi \in \FCb(X,H)}\int_X -T_t u \dive_\gga \Phi -F^*(\Phi)\, d\gga\\
			&=\sup_{\Phi \in \FCb(X,H)} \int_X-e^{-t} u \dive_\gga T_t\Phi -F^*(\Phi)\, d\gga\\
			&\le\sup_{\Phi \in \FCb(X,H)} \int_X-e^{-t} u \dive_\gga T_t\Phi -F^*(T_t\Phi)\, d\gga\\
			&\le e^{-t}\sup_{\Phi \in \FCb(X,H)}  \int_X-e^{-t} u \dive_\gga T_t\Phi -F^*(T_t\Phi)\, d\gga\\
			&\le e^{-t} \int_X F(D_\gga u)
\end{align*}
where, as $F(0)=0$ we have $F^* \ge 0$ and thus $e^{-t} F^*\le F^*$ . This inequality shows that  
\[
\int_X F(D_\gga u) \ge 
\inf \varliminf \left\{\int_X F(\nabla_H u_n) \, d\gga\,: \, u_n \in W^{1,1}_\gga(X)\,, \quad u_n \to u \textrm{ in } \Ldeu \right\}.\]

\noindent \textit{Case 2.} Let $F$ be a proper l.s.c.~convex function and $q\in H$ be  such that $F'(h)=F(h)-[q,h]$ satisfies (H1) then for $u\in \Ldeu$,
\[\int_X F(D_\gga u)=\int_X F'(D_\gga u) -\int_X u \dive_\gga p \ d\gga.\]
Therefore, by Case 1 applied to $F'$ we get that 
\[
\int_X F(D_\gga u) = 
\inf \varliminf \left\{\int_X F(\nabla_H u_n) \, d\gga\,: \, u_n \in W^{1,1}_\gga(X)\,, \quad u_n \to u \textrm{ in } \Ldeu \right\}.
\]

\noindent \textit{Case 3.} If now $F$ satisfies (H2), by the density of $\FCb(X)$ in $W^{1,p}_\gga(X)$ for $p\ge 1$, for every $u \in W^{1,p}_\gga(X)$ there exists $u_n\in \FCb(X)$ tending to $u$ in  $W^{1,p}_\gga(X)$ and almost everywhere. Then as $F(\nabla_H u_n) \le \alpha_2 \normH{\nabla_H u_n}^p +\beta_2$, by the dominated convergence theorem,
\[\int_X F(\nabla_H u_n) \ d\gga \to \int_X F(\nabla_H u) \ d\gga. \]
Thus starting from $W_\gga^{1,p}(X)$ or $\FCb(X)$ gives the same relaxation for $\int_X F(\nabla_H u) d\gga$.
\end{proof}

\section{The finite dimensional case}\label{secfin}

In this section we focus on the finite dimensional problem. Let $F:\R^m \to \R$ be a  convex function satisfying for $p\ge 1$,
\[(H_2') \qquad \alpha_2 |h|^p+\beta_2\ge F(h)\ge \alpha |h|^p -\beta \qquad \forall h \in \R^m.\] 
As before we set
\[\int_{\R^m} F(D_{\gga_m} u) \, d\gga_m := 
\sup_{\Phi \in \cC^1_b(\R^m)} \int_{\R^m} \left(-u \dive_\gga \Phi -F^*(\Phi)\right) \, d\gga_m.\]
By Theorem \ref{corDemTem} and Proposition \ref{relax}, 
\[\int_X F(D_{\gga_m} u) = \int_{\R^m} F(\nabla u) d\gga_m+ \int_{\R^m} F^\infty\left(\frac{d D^s_{\gga_m} u}{d|D^s_{\gga_m} u|}\right) d|D^s_{\gga_m} u|\]

\noindent and this functional also coincides with the relaxation for the $\Ldeufin$ topology of the functional classically defined on Lipschitz functions $u$ by $\int_{\R^m} F(\nabla u) d\gga_m$. In this finite dimensional setting this representation formula is not new (see \cite{BDalMaso} and \cite{But}).\\
 We show in this section the convexity of the solutions of
\begin{equation}\label{VP}
\min_{u\in \Ldeufin} \int_{\R^m} F(D_{\gga_m} u) + \frac{(u-g)}{2}^2\,d\gamma_m.
\end{equation}

\noindent Formally the Euler-Lagrange equation of this problem reads
\begin{equation}\label{EL}
-\Div \nabla F(\nabla u) \ +\ x\cdot \nabla F(\nabla u) \ +\ u\ =\ g.
\end{equation}

\begin{thm}\label{convfin}
Let $F:\R^m \to \R$ be a  convex function satisfying (H2') and $g\in \Ldeufin$
be a convex function. The minimizer of \eqref{VP} is then convex.
\end{thm}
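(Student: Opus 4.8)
The plan is to adapt the comparison/convexity argument of Caselles--Chambolle to the Gaussian weight. First, uniqueness: the functional in \eqref{VP} is strictly convex in $u$ (because of the $\frac12\int (u-g)^2\,d\gamma_m$ term), so there is a unique minimizer $u$, and \eqref{VP} is equivalent to an $L^2_{\gamma_m}$-projection type problem. I would first reduce the linear-growth difficulties by a regularization: replace $F$ by $F_\eps(h)=F(h)+\tfrac{\eps}{2}|h|^2$ (still satisfying (H2') with the same $p$ if $p\ge 2$, and in any case coercive of quadratic type), solve \eqref{VP} with $F_\eps$ to get $u_\eps\in H^1_{\gamma_m}$, prove $u_\eps$ convex, and pass to the limit $\eps\to 0$ using the relaxation/lower-semicontinuity statement of Proposition \ref{relax} together with the uniform bound $\int F_\eps(D_{\gamma_m}u_\eps)+\tfrac12\int(u_\eps-g)^2\le \int F(D_{\gamma_m} 0)+\tfrac12\int g^2$; since uniform convexity is preserved under pointwise/$L^2$ limits, $u=\lim u_\eps$ is convex. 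So the heart of the matter is the strictly-convex, $H^1_{\gamma_m}$ case, where $u$ solves \eqref{EL} in the weak sense.

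For the regularized problem the key is a \emph{comparison principle combined with translations}, exactly as in the Euclidean case but tracking the extra drift term $x\cdot\nabla F(\nabla u)$ in \eqref{EL}. Fix a direction $e$ and $\tau\in\R$, and set $u^\tau(x):=u(x+\tau e)$. The subtlety is that $u^\tau$ does not solve the same equation because the vector field $\gamma_m$ and the drift are not translation invariant; instead $u^\tau$ solves \eqref{EL} with $g$ replaced by $\tilde g(x):=g(x+\tau e)+\tau\,e\cdot\nabla F(\nabla u(x+\tau e))$ — more precisely, a comparison should be run at the level of the variational inequality. The convexity of $u$ along $e$ is the statement $\tfrac12(u^\tau+u^{-\tau})\ge u$ for all $\tau$. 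I would prove this by testing the Euler--Lagrange inequalities for the three competitors $u$, $u^\tau$, $u^{-\tau}$ against well-chosen truncations of their differences, using that $F_\eps$ is convex (so $\nabla F_\eps$ is monotone) and that $g$ is convex so that $g(x+\tau e)+g(x-\tau e)\ge 2g(x)$. The monotonicity of $\nabla F_\eps$ controls the gradient terms, the convexity of $g$ controls the zeroth-order terms, and the Gaussian drift terms combine because $x\mapsto x$ is linear, so $\tfrac12\big((x+\tau e)+(x-\tau e)\big)=x$; the only genuinely new terms are the $\tau\,e\cdot\nabla F(\cdot)$ remainders, and these must be shown to have a sign or to be absorbed. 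Concretely, I expect to set $w:=\max(u,\tfrac12(u^\tau+u^{-\tau}))-u\ge 0$ and show $\int |\nabla w|^2$-type quantity $\le 0$ via the weak formulations, forcing $w\equiv 0$.

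An alternative, cleaner route — and the one I would try first — is to run the argument through the \emph{level sets} and the coarea formula (Proposition \ref{procoarea}): by a result implicit in the paper's setup, minimizing \eqref{VP} is equivalent (layer-cake in the spirit of \cite{CaCha}) to solving, for a.e.\ level $s$, a prescribed-Gaussian-curvature obstacle problem whose solution is a convex set, by Theorem \ref{CaMiNointro}-type reasoning applied fiberwise; then $u$ being convex is equivalent to all superlevel sets $\{u>s\}$ being convex. This transfers the whole burden to: (i) proving the level-set decomposition of \eqref{VP} rigorously in the Gaussian setting (the coarea formula and the $1$-homogeneous structure of $\int F(D_{\gamma_m}u)$ when $F$ is replaced by $F^\infty$ make this plausible but it requires the $p=1$ reduction), and (ii) the convexity of minimizers of the geometric problem, which is where Gaussian symmetrization / the maximum principle along \eqref{EL} genuinely enters. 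I expect step (ii) — equivalently, the core comparison estimate in the second paragraph — to be the main obstacle: one must ensure that the non-translation-invariance of $\gamma_m$ does not destroy the comparison, and the fix is precisely that the drift $x\cdot\nabla F(\nabla u)$ is \emph{affine} in $x$, so second differences in $x$ kill it, while convexity of $g$ handles the source term; making this rigorous at the level of the (possibly only $BV_{\gamma_m}$) minimizer, rather than a classical solution, is the delicate point, resolved by the $F_\eps$-regularization and the density/relaxation results of Section \ref{repr}.
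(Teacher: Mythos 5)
\noindent Your outer scheme (regularize $F$, get uniform energy and $L^2_{\gamma_m}$ bounds, pass convexity to the limit through locally uniform convergence of convex functions and lower semicontinuity) is fine and parallels the paper's limiting arguments. The genuine gap is exactly the step you yourself flag: convexity of the minimizer of the regularized, uniformly convex problem. Your fixed-translation midpoint comparison does not close in the Gaussian setting. At the variational level, the usual Euclidean trick of comparing the energies of $u$ and of $\tfrac12(u^\tau+u^{-\tau})$ fails because $\gamma_m$ is not translation invariant: $\int F(\nabla u^{\pm\tau})\,d\gamma_m$ differs from $\int F(\nabla u)\,d\gamma_m$ by the Cameron--Martin factor $\exp(\pm\tau e\cdot x-\tau^2|e|^2/2)$, which is exponential, not affine, in $x$, so it is not ``killed by second differences.'' At the PDE level, the second difference of the drift is
\[
(x+\tau e)\cdot\nabla F(\nabla u(x+\tau e))+(x-\tau e)\cdot\nabla F(\nabla u(x-\tau e))-2\,x\cdot\nabla F(\nabla u(x)),
\]
and the leftover terms $\pm\tau e\cdot\nabla F(\nabla u(x\pm\tau e))$ are of first order in $\tau$, have no sign, and cannot be absorbed by the quadratic-in-defect good terms of your test-function argument; they would only cancel if you already knew a symmetry or convexity property of $\nabla u$, i.e.\ what you are trying to prove. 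This is why the paper uses a completely different mechanism at the regularized level: it builds an explicit convex subsolution $u_\e$, solves the obstacle problem above it, shows the minimizer is a viscosity supersolution of \eqref{ELapprox}, invokes \cite[Proposition~3]{ALL} (the convex envelope of a supersolution with superlinear growth is again a supersolution), runs a Perron-type argument to show the smallest such supersolution is a solution, and uses the $C^{1,1}$ regularity of \cite{Imbert} to identify it with the unique minimizer. A Korevaar-type route is indeed possible (this is the two-point concavity maximum principle of \cite{kore}, see \cite{theseG}, and the paper's remark), but that argument works with the function $(x,y)\mapsto u(\tfrac{x+y}{2})-\tfrac12(u(x)+u(y))$ and the joint structure of the Hamiltonian, not with a comparison between $u$ and its translates; as written, your key estimate is missing.

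\noindent The alternative level-set route is also not viable as described. The functional $\int_{\R^m}F(D_{\gamma_m}u)$ decomposes over level sets only when $F$ is positively $1$-homogeneous, so a ``$p=1$ reduction'' does not cover the general (H2') case; and even in the $1$-homogeneous case the convexity of the minimizers of the resulting prescribed-curvature problems is not available as an input: in the paper it is Theorem \ref{main} that \emph{deduces} convexity of the geometric problems from Theorems \ref{convfin} and \ref{convlinear}, so your step (ii) is essentially circular. In short, both of your routes push the whole difficulty into the one estimate that is never established, and that estimate is false as stated (the drift and the Gaussian weight are not handled by affinity in $x$); the proof needs the subsolution/convex-envelope viscosity argument or a full Korevaar-type maximum principle.
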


\begin{proof} 
\noindent Before entering into the details, let us give a sketch of the proof. We first approximate the functions $F$ and $g$ by smooth quadratic functions $F_n$ and $g_\e$ for which we can use the results of \cite{ALL}. We then construct for this approximating functions, a convex subsolution $u_\eps$ of the problem and consider $u_\eps^n$ the least convex supersolution  of the problem which is greater than $u_\e$. We then show that $u_\e^n$ is in fact a solution of the approximated problem and then let $\e\to0$ and $n\to +\infty$.

\noindent  Let $F_n\to F$ be a sequence of smooth, uniformly convex
functions, with quadratic growth which converge locally uniformly
to $F$. The functional $\int_{\R^m} F_n(\nabla u) d\gga_m$ is then finite if and only if $u \in H^1_{\gga_m}(\R^m)$. Without loss of generality we can assume that $\inf F_n=F_n(0)=\inf F^*_n=F_n^*(0)=0$ and thus $\nabla F_n(0)=\nabla F^*_n(0)=0$.

\noindent  We consider for $\e>0$ the approximation
\[
g_\e(x) \ =\ \max\{g(x),-\frac{1}{\e}\} + \e x^2 + \frac{1}{\e} F_n^*(\e x)
\]
so that $g_\e\to g$ locally uniformly as $\e\to 0$. 
Indeed, it follows from the uniform convexity of $F_n$ that $F_n^*$ is differentiable, hence
\[
\lim_{\e\to 0}\frac{1}{\e} F_n^*(\e x)\ =\ \nabla F^*_n(0)\cdot x\ =\ 0.
\]
Since $F_n(p)\ge  C(|p|^2-1)$, $F^*_n(q) \le C(|q|^2+1)$
and $g_\e\in \Ldeufin$.

\noindent In particular, letting
\[
u_\e(x) \ =\ \frac{F_n^*(\e x)}{\e} + m\e - \frac{1}{\e}\ \in \ \Ldeufin\,,
\]
we have 
\[
-\Div \nabla F_n(\nabla u_\e) \ +\ x\cdot \nabla F_n(\nabla u_\e) \ +\ u_\e
\ =\ -m\e\ +\ \e x^2\ +\ \frac{F_n^*(\e x)}{\e}\ +\ m\e\ -\ \frac{1}{\e}\ 
\le \ g_\e(x)
\]
hence $u_\e$ is a classical subsolution of the approximate problem.
We observe that both $g_\e$ and $u_\e$ have superlinear growth
at infinity.

\noindent We now consider the solution $\bar u $ of
\begin{equation}\label{VPvincol}
\min_{u\ge u_\e}  \int_{\R^m} F_n(\nabla u) + \frac{(u-g_\e)}{2}^2\,d\gamma_m
\end{equation}
which by definition is above $u_\e$. 

\noindent We first show that it is a viscosity supersolution of
\begin{equation}\label{ELapprox}
-\Div \nabla F_n(\nabla u) \ +\ x\cdot \nabla F_n(\nabla u) \ +\ u\ =\ g_\e.
\end{equation}

Let us first notice that by \cite{MicZiem}, the function $\bar u$ is H\"older continuous. Assume that $\bar u$ is not a supersolution of \eqref{VPvincol} then there exists $x_0 \in \R^m$ and a smooth function $\phi$ such that $\phi < \bar u$ in $\R^m \backslash \{x_0\}$, $\phi(x_0)=\bar u(x_0)$ and
\begin{equation}\label{ineqphi}
-\Div \nabla F_n(\nabla \phi) \ +\ x\cdot \nabla F_n(\nabla \phi) \ +\ \phi\ -\ 
g_\e\ <\ 0 \quad \textrm{ at } x_0.
\end{equation}
Replacing $\phi$ by $\phi-\eta |x-x_0|^2$ and noticing that
$\{\phi-\eta |x-x_0|^2 +\delta >\bar u\}\subset B(x_0, \sqrt{\delta/\eta})$
we can further assume by the smoothness of $\phi$, $F_n$ that \eqref{ineqphi} holds on
the open set $\{\phi+\delta >\bar u \}$ for $\delta>0$ small enough.
As $v=\max(\phi+\delta, \bar u)\ge u_\e$, we have
\[\int_{\R^m} F_n(\nabla v)+ \frac{(v-g_\e)^2}{2} d \gamma_m \ge \int_{\R^m} F_n(\nabla \bar u)+ \frac{(\bar u -g_\e)^2}{2} d \gamma_m\]
and thus
\[\int_{\{\phi+\delta>\bar u\}} F_n(\nabla \phi)+\frac{(\phi+\delta-g_\e)^2}{2} d \gamma_m \ge \int_{\{\phi+\delta>\bar u\}} F_n(\nabla \bar u)+\frac{(\bar u-g_\e)^2}{2} d \gamma_m.\]
Using that $F_n(\nabla \bar u)-F_n(\nabla \phi)\ge \nabla F_n(\nabla \phi) \cdot (\nabla \bar u -\nabla \phi)$ by convexity of $F_n$ and 
\[\frac{(\bar u-g_\e)^2}{2}-\frac{(\phi+\delta-g_\e)^2}{2}= \frac{(\phi+\delta-\bar u)^2}{2}+(\phi+\delta-g_\e)(\bar u -\phi-\delta)\]
we get
\begin{align*}
 0&\ge \int_{\{\phi+\delta>\bar u\}}\!\!\! \nabla F_n(\nabla \phi) \cdot (\nabla \bar u -\nabla \phi)+\frac{(\phi+\delta-\bar u)^2}{2}+(\phi+\delta-g_\e)(\bar u -\phi-\delta)d \gamma_m\\
&=\int_{\{\phi+\delta>\bar u\}} \!\!\!\left[-\Div \nabla F_n(\nabla \phi) +x\cdot \nabla F_n(\nabla \phi)+\phi+\delta-g_\e\right](\bar u -\phi -\delta) +\frac{(\phi+\delta-\bar u)^2}{2}d \gamma_m\\
&> 0
\end{align*}
and thus a contradiction. The integration by part used above is justified by the fact that $\{\phi+\delta >\bar u \}$ is an open set on the boundary of which $\phi+\delta$ and $\bar u$ agree.

\noindent Notice that using the same arguments it can be shown that there is no contact between $u_\e$ and $\bar u$ so that $\bar u$ is in fact an unconstrained minimizer of the energy.

\noindent Now, thanks to \cite[Proposition~3]{ALL}, given any supersolution
$u$  of \eqref{ELapprox}, with superlinear growth, the convex
envelope $u^{**}$ is still a supersolution. Moreover, if $u\ge u_\e$,
then clearly $u^{**}\ge u_\e$ (which is convex).

\noindent Hence, if we define $\tilde{u}\le \bar u$ as the infimum of all
supersolutions of \eqref{ELapprox} which are larger than $u_\e$,
it is also the infimum of their convex envelopes (hence it is
a locally uniform limit of convex supersolutions) and therefore
is convex. It is also a supersolution.

\noindent Let us now show that $\tilde{u}$ is a viscosity solution. If it
were not, there would exist a smooth $\phi$ and $x\in \R^m$ with
$\tilde{u}(x)=\phi(x)$, and $\tilde{u}<\phi$ in $\R^m\setminus\{x\}$,
with 
\[
-\Div \nabla F_n(\nabla \phi(x)) \ +\ x\cdot \nabla F_n(\nabla \phi(x))\ +\ \phi(x)\ >\ g_\e(x).
\]
In particular, $\tilde{u}(x)>u_\e(x)$, otherwise $x$ would also
be a local maximum of $u_\e-\phi$ and the reverse inequality should hold.
Now, by standard arguments, we check that $\min\{\tilde{u},\phi-\delta\}$
is still a supersolution, larger than $u_\e$, if $\delta>0$ is small
enough, a contradiction.

\noindent Hence $\tilde{u}$ is a solution of~\eqref{ELapprox}. By \cite[Theorem~4]{Imbert}, $\tilde{u}$ is a $C^{1,1}$ function and thus by \cite[Lemma~2]{ALL}, $\tilde{u}$ satisfies \eqref{ELapprox} almost everywhere (and also weakly). The function $\tilde{u}$ is therefore  a critical point of the (strictly
convex) energy, hence the unique solution to \eqref{VP} (with $F$ replaced
with $F_n$ and $g$ with $g_\e$). Denote now this solution by $u^n_\e$.
\smallskip

\noindent Let us now show that we can send $\e\to 0$ and then $n\to\infty$.

\noindent Comparing the energy of $u^n_\e$ with the energy of $0$, we find that
\begin{equation}\label{inequL}
\|u^n_\e\|_{\Ldeufin} \ \le\ 2\|g_\e\|_{\Ldeufin}
\ \le\ 2\|g\|_{\Ldeufin}+2\left\|\e x^2 + \frac{1}{\e} F_n^*(\e x)\right\|_{\Ldeufin}
\end{equation}
so that $\|u^n_\e\|_{\Ldeufin}$ is uniformly bounded. Hence,
we can send $\e\to 0$ and will find that $u^n_\e\wto u^n$. By a Theorem of Dudley \cite{dudley}, $u^n_\e$ converges locally uniformly to $u^n$ which is thus convex. By the lower-semicontinuity of the energy, $u^n$ is the solution of problem~\eqref{VP} with $F$ replaced with $F_n$.\\

\noindent Analogously,  $u^n\to u$ locally uniformly since by \eqref{inequL}, $\|u^n\|_{\Ldeufin}  \le\ 2\|g\|_{\Ldeufin}$ and thus $u$ is convex. Let us show that $u$ is the minimizer of \eqref{VP}. We start by proving that 
\begin{equation}\label{liminf}
 \varliminf_{n\to \infty} \int_{\R^m} F_n(\nabla u^n) d\gga_m \ge \int_{\R^m} F(\nabla u) d\gga_m.
\end{equation}

\noindent Since $u^n$ is a sequence of convex functions converging to $u \in \Ldeufin$ then, up to subsequence, $\nabla u^n$ converges to $\nabla u$ almost everywhere. Moreover, for all $R>0$ there exists $C=C(R,v)$ such that $\|\nabla u^n\|_{L^\infty(B_R)}\le C$ for all $n\in\mathbb N$. 
This is a general property of convex functions and we refer to \cite[Theorem~3]{CaCha} for further details.
\noindent By Fatou's Lemma, we then get 
\[\varliminf_{n\to \infty} \int_{\R^m} F_n(\nabla u^n) d\gga_m \ge \varliminf_{n\to \infty} \int_{B_R} F_n(\nabla u^n) d\gga_m
=\int_{B_R} F(\nabla u) d\gga_m.\]
Letting $R\to +\infty$ we obtain \eqref{liminf}.

\noindent Now if $v$ is a Lipschitz function in $\Ldeufin$, as $F_n$ converges locally uniformly to $F$,
\[\lim_{n\to \infty} \int_{\R^m} F_n(\nabla v) d\gga_m=\int_{\R^m} F(\nabla v) d\gga_m\]
and thus, by the minimality of $u^n$ and \eqref{liminf},
\begin{align*}
 \int_{\R^m} F(\nabla v) +\frac{(v-g)}{2}^2\,d\gamma_m&=\lim_{n\to \infty}\int_{\R^m} F_n(\nabla v) +\frac{(v-g)}{2}^2\,d\gamma_m\\
							&\ge \varliminf_{n \to \infty} \int_{\R^m} F_n(\nabla u^n) +\frac{(u^n-g)}{2}^2\,d\gamma_m\\
							&\ge \int_{\R^m} F(\nabla u) +\frac{(u-g)}{2}^2\,d\gamma_m.
\end{align*}
Since Lipschitz functions are dense in energy in $\Ldeufin$, we obtain that $u$ is a minimizer of \eqref{VP}.
 \end{proof}

\begin{remarque} \rm
The proof directly extends to variational problems of the form
\[\min_{u\in L^2(\mu)} \int_{\R^m} F(\nabla u) + \frac{(u-g)}{2}^2\,d\mu\]
for measures $d \mu= \mu(x)\,dx$, with $\mu(x)=e^{-(Ax,x)}$ and $A>0$.
\end{remarque}

\begin{remarque} \rm
An other possible approach for proving convexity of the minimizers of \eqref{VP} is to adapt the ideas of Korevaar \cite{kore}, see \cite{theseG} for details. 
\end{remarque}

\begin{remarque}\rm
 Arguing as in the Theorem \ref{convlinear} of the next section, we see that this result extends to generic proper l.s.c.~convex functions $F$.
\end{remarque}

\section{The infinite dimensional case}\label{secinf}
In this final section we return to the infinite dimensional problem. 
\begin{thm}\label{convlinear}
 Let $F: H \to \R\cup \{+\infty\}$ be a proper l.s.c.~convex function  and $g \in \Ldeu$ be a convex function. Then the minimizer of 
\[J(u):=\int_XF(D_\gga u) +\frac{1}{2}\int_X (u-g)^2 d\gga\]
is convex.
\end{thm}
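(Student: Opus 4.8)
The strategy is to reduce the infinite-dimensional problem to the finite-dimensional one already solved in Theorem~\ref{convfin}, by using the cylindrical approximations $\bEm$ and a $\Gamma$-convergence/compactness argument. First I would reduce to the case where $F$ satisfies (H1): by Hahn--Banach there is $q\in H$ with $F'(h):=F(h)-[q,h]_H$ satisfying (H1), and since $\int_X F(D_\gga u)=\int_X F'(D_\gga u)-\int_X u\,\dive_\gga p\,d\gga$ differs from $\int_X F(D_\gga u)$ by a term linear in $u$, convexity of the minimizer of $J$ is equivalent to that of the minimizer of the corresponding functional for $F'$; so assume $F(0)=0$, $F\ge 0$. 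Existence and uniqueness of the minimizer $u$ of $J$ follow from the direct method: $J$ is l.s.c.\ on $\Ldeu$ (by the very definition \eqref{repsupF} as a sup of continuous functionals) and strictly convex, and the quadratic term is coercive.

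\noindent Next I would set up the finite-dimensional approximation. For each $m$, consider on $\Ldeufin$ (identified with cylindrical functions on $X$) the functional
\[
J_m(v):=\int_{\R^m} F(D_{\gga_m} v)\,d\gga_m+\frac12\int_{\R^m}(v-\bEm g)^2\,d\gga_m,
\]
where I use that $F$ restricted to $H_m\cong\R^m$ still satisfies the hypotheses needed (it is proper, l.s.c., convex; if it fails (H2') one first truncates $F$ by $F_n$ as in Theorem~\ref{convfin}, or invokes the last Remark of Section~\ref{secfin} extending Theorem~\ref{convfin} to generic proper l.s.c.\ convex $F$). Since $\bEm g$ is convex on $\R^m$ whenever $g$ is convex on $X$ (averaging a convex function over $X_m^\perp$ preserves convexity in the remaining variables), Theorem~\ref{convfin} gives that the unique minimizer $u_m$ of $J_m$ is convex. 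Viewed as a cylindrical function on $X$, $u_m$ is convex on $X$.

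\noindent The heart of the argument is then to pass to the limit $m\to\infty$ and identify $\lim u_m$ with $u$. Comparing $J_m(u_m)$ with $J_m(0)=\frac12\int(\bEm g)^2\le\frac12\int g^2$ (Jensen) gives a uniform bound $\|u_m\|_{\Ldeu}\le 2\|g\|_{\Ldeu}$, and also a uniform bound on $\int_X F(D_\gga u_m)$, hence (using (H2) or the growth after the $q$-shift) a uniform $BV_\gga$ bound; by the compactness of $BV_\gga$ in $L^1_\gga$ (and the $L^2$ bound), up to a subsequence $u_m\to u_\infty$ in $\Ldeu$ and a.e., with $u_\infty$ convex (a pointwise a.e.\ limit of convex functions is convex, after adjusting on a null set). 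For the $\liminf$ inequality $\varliminf_m \int_X F(D_\gga u_m)\ge\int_X F(D_\gga u_\infty)$ I would test with a fixed $\Phi\in\FCb(X,H)$, say $\Phi$ depending only on the first $k$ coordinates: for $m\ge k$, $\int_X -u_m\dive_\gga\Phi\,d\gga=\int_{\R^m}-u_m\,\dive_{\gga_m}\Phi\,d\gga_m$ and passing to the limit using $u_m\to u_\infty$ in $\Ldeu$ gives $\int_X -u_\infty\dive_\gga\Phi-F^*(\Phi)\,d\gga\le\varliminf_m\int_X F(D_\gga u_m)$; taking the sup over $\Phi$ yields the claim via \eqref{repsupF}. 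For the reverse (upper bound), given any test competitor $v$ I would use $\bEm v$: one shows $\int_{\R^m}F(D_{\gga_m}\bEm v)\,d\gga_m\le\int_X F(D_\gga v)\,d\gga$ (this is essentially Jensen's inequality for the conditional expectation $\bEm$ applied inside the sup-representation, exactly in the spirit of \eqref{jensF}), and $\bEm v\to v$, $\bEm g\to g$ in $\Ldeu$, so $\varliminf_m J_m(\bEm v)\le J(v)$; combined with minimality of $u_m$ and the $\liminf$ inequality this forces $J(u_\infty)\le J(v)$ for all $v$, whence $u_\infty$ is the minimizer, $u_\infty=u$ by uniqueness, and $u$ is convex.

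\noindent The main obstacle I expect is the $\liminf$/relaxation bookkeeping: precisely, showing $\int_{\R^m}F(D_{\gga_m}\bEm v)\,d\gga_m\le\int_X F(D_\gga v)\,d\gga$ for $v\in BV_\gga(X)$, which requires knowing that $\bEm$ maps $BV_\gga(X)$ to $BV_{\gga_m}(\R^m)$ with the correct contraction of the anisotropic total variation — this follows from the dual representation \eqref{repsupF} together with the fact that $\dive_{\gga_m}(\bEm\Phi)=\bEm(\dive_\gga\Phi)$ for cylindrical $\Phi$ and that $\bEm$ is an $L^2_\gga$-contraction commuting with composition by convex functions (Jensen), but making all of this rigorous in the Wiener setting, including the $BV_\gga$ compactness used for the subsequential limit, is the technical crux. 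A cleaner alternative, if one prefers to avoid $\bEm$ on the competitor side, is to run the whole argument first for $g$ replaced by $\bEm g$ and $F$ by its restriction to $H_m$ only through the already-established Theorem~\ref{convfin}, then note that $J$ itself is the $\Gamma$-limit (w.r.t.\ $\Ldeu$) of the $J_m$ — this is exactly the content of the relaxation Proposition~\ref{relax} combined with $\bEm g\to g$ — and invoke the standard fact that minimizers of $\Gamma$-converging equicoercive functionals converge to a minimizer of the limit, which is then convex as a limit of the convex $u_m$.
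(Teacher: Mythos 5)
Your overall architecture (cylindrical approximation, Theorem \ref{convfin} applied in each $\R^m$, passage to the limit $m\to\infty$) is the same as the paper's, but two of your steps fail as written. The first is the compactness claim: bounded sets of $BV_\gga(X)\cap\Ldeu$ are \emph{not} relatively compact in $L^1_\gga(X)$ when $X$ is infinite dimensional. Take an orthonormal basis $(h_n)$ of $H$ and $u_n=\hat h_n$; then $\|u_n\|_{\Ldeu}=1$ and $|D_\gga u_n|(X)=\int_X\normH{h_n}\,d\gga=1$, but the $u_n$ are i.i.d.\ standard Gaussian variables: they converge to $0$ only weakly, no subsequence converges in $L^1_\gga$ (the $L^1$ norms are constant $\sqrt{2/\pi}$ while any strong limit would have to be the weak limit $0$), and by Borel--Cantelli no subsequence converges $\gga$-a.e.\ either. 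So you cannot extract an a.e.\ convergent subsequence of your minimizers $u_m$ and cannot read convexity of the limit pointwise. All that the bound $J_m(u_m)\le J_m(0)$ gives is weak convergence in $\Ldeu$ along a subsequence, and the convexity of a \emph{weak} limit of convex functions is precisely the nontrivial ingredient the paper imports from Feyel--\"Ust\"unel \cite{feyel}; your liminf inequality survives under weak convergence, but your convexity step does not, and without that result the argument is incomplete.

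The second gap is the recovery step. The contraction $\int_{\R^m}F(D_{\gga_m}\bEm v)\,d\gga_m\le\int_X F(D_\gga v)$ is false for anisotropic $F$: since $\nabla_H\bEm v=\bEm\big(P_{H_m}\nabla_H v\big)$, Jensen only yields $\int_X F(\nabla_H\bEm v)\,d\gga\le\int_X F(P_{H_m}\nabla_H v)\,d\gga$, and $F(P_{H_m}h)\le F(h)$ need not hold. Concretely, in $X=\R^2$ take $F(h)=|h_1-h_2|$ (or $|h_1-h_2|+\e\normH{h}$ with $\e$ small, to avoid degeneracy) and $v(x)=\phi(x_1+x_2)$ with $\phi$ smooth, bounded, increasing: then $\int_X F(D_\gga v)=0$ (resp.\ $\e\sqrt2\,\mathbb{E}[\phi'(x_1+x_2)]$), while $\mathbb{E}_1 v$ has $\int_\R F(\,\partial_1\mathbb{E}_1 v,0\,)\,d\gga_1=\mathbb{E}[\phi'(x_1+x_2)]>0$, so the claimed inequality is violated. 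The paper avoids this by testing only with cylindrical competitors $u\in\FCb(X)$ — for which $J_m(u)\to J(u)$ trivially — and then invoking the density in energy of $\FCb(X)$ from Proposition \ref{relax}, which requires (H2) and not merely (H1). For the same reason your Hahn--Banach shift to (H1) does not settle the general case (it also replaces $g$ by $g-\hat q$ with $\hat q$ a measurable linear functional, a point needing justification), nor does the remark after Theorem \ref{convfin}, which itself relies on the scheme you are trying to prove. The paper handles a general proper l.s.c.\ convex $F$ by the inf-convolution $F_\delta(p)=\delta\normH{p}+\inf_{q\in H}\big(\tfrac1\delta\normH{p-q}+F(q)\big)$, which has linear growth, so the (H2) case applies and gives convex minimizers $u_\delta$, and then passes to the limit $\delta\to0$ through separate $\limsup$/$\liminf$ estimates on $F_\delta$ and $F_\delta^*$. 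Some second approximation of this kind is needed; as written, your plan does not reach general $F$.
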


\begin{proof}
\noindent \textit{Case 1.} We start by assuming that $F$ satisfies also (H2) .\\

\noindent Let $g_m=\bEm(g)$ then $g_m$ is a convex function. Let also $\bar u_m$ be the minimizer of
\[
\min_{u\in\Ldeu:\,u=\bEm u} J_m(u):=\int_X F(D_\gga u)+\frac{1}{2}\int_X(u-g_m)^2 d\gga. \]
Thanks to \eqref{equivF}, if $u$ depends only on the first $m$ variables then 
\[\int_X F(D_\gga u) =\int_X F_m(D_\gga u)=\int_{\R^m} F_m(D_{\gga_m} u)\]
where $F_m(h)=F(\Pi_m h)$. By Theorem \ref{convfin}, $\bar u_m$ is thus a convex function. As $J_m(\bar u_m)\le J_m(0)$ and since $g_m\to g$ in $\Ldeu$, 
$\bar u_m$ is bounded in $\Ldeu$ and is thus weakly converging to $\bar u$ which is therefore convex by \cite[Theorem 4.4]{feyel}. \\

\noindent We now show that  $\bar u$ is the minimizer of $J$.\\

\noindent If $u_m$ is a  weakly converging sequence to $u\in \Ldeu$, then by strong convergence of $g_m$ to $g$ we have 
\[\varliminf_{m\to \infty} \frac{1}{2}\int_X|u_m-g_m|^2 d\gga\ge \frac{1}{2}\int_X(u-g)^2 d\gga.\]
By the lower semicontinuity of $\int_X F(D_\gga u)$ (which comes from \eqref{repsupF})  we then have
\[\varliminf_{m \to \infty} J_m(u_m) \ge J(u).\]

\noindent Thus if $u \in \FCb(X)$, by minimality of $\bar u_m$,
\begin{equation}\label{ineqbarucyl}
 J(u)= \lim_{m \to +\infty} J_m(u) \ge \varliminf_{m \to +\infty} J_m (\bar u_m) \ge J(\bar u).
\end{equation}

\noindent Since we assumed that $F$ satisfies (H2), by Proposition \ref{relax}, the space $\FCb(X)$ is dense in energy in $\Ldeu$ and thus inequality \eqref{ineqbarucyl} proves that $\bar u$ is the minimizer of $J$ in $\Ldeu$.\\

  \noindent \textit{Case 2.} If $F$ is a proper l.s.c.~convex function, we can approximate it by a convex function $F_\delta$ with linear growth 
\[F_\delta(p):=\delta \normH{p}+\inf_{q\in H} \left( \frac{1}{\delta}\normH{p-q} +F(q)\right).\]
By Case 1, the minimizer $u_\delta$ of the functional with $F_\delta$ instead of $F$ is convex. As before, we have that $u_\delta$ weakly converges to a convex function $u$ in $\Ldeu$.  As $W_\gga^{1,1}(X)$ is dense in energy in $\Ldeu$, in order to conclude, it is  sufficient to prove that for every $v\in W^{1,1}_\gga(X)\cap \Ldeu$, 
\begin{equation}\label{limsupdelta}\int_X F(\nabla_H v) d\gga \ge \varlimsup_{\delta \to 0} \int_X F_\delta(\nabla_H v) \end{equation}
and 
\begin{equation}\label{liminfdelta}\varliminf_{\delta\to 0} \int_X F_\delta (D_\gga u_\delta) \ge \int_X F(D_\gga u).\end{equation}
For inequality \eqref{limsupdelta} we can assume that $\int_X F(\nabla_H v) d\gga<+\infty$ then as for the Moreau regularization, $\lim_{\delta \to 0} F_\delta(p)= F(p)$ for every $p \in H$ so that for every $v\in W^{1,1}_\gga(X)$, $F_\delta (\nabla_H v)$ converges almost everywhere to $F(\nabla_H v)$ and since $F_\delta(\nabla_H v) \le \delta \normH{\nabla_H v}+F(\nabla_H v)$, by the dominated convergence Theorem, inequality \eqref{limsupdelta} follows.\\

\noindent For inequality \eqref{liminfdelta}, we start by noticing that by calculus on inf-convolutions and convex conjugates, we have,
\[F_\delta^*(q)=\inf_{\stackrel{\normH{p}\le \frac{1}{\delta}}{\normH{p-q}\le \delta}} F^*(p),\]
where we take as a convention that $ F_\delta^*(q)=+\infty$ if $B_{\frac{1}{\delta}}\cap B_\delta(q)=\emptyset$. Therefore, for every $q \in H$, as soon as $\normH{q} \le \frac{1}{\delta}$, we have  $F_\delta^*(q) \le F^*(q)$ and thus
\[\varlimsup_{\delta \to 0} F_\delta^*(q) \le F^*(q) \qquad \forall q \in H.\]
If now $\Phi \in \FCb(X,H)$ with $F^*(\Phi)$ integrable, we have $F_\delta^*(\Phi) \le F^*(\Phi)$ for $\delta$ small enough and thus by the reverse Fatou lemma,
\begin{equation}\label{limsupstar}
 \varlimsup_{\delta \to 0} \int_X F_\delta^*(\Phi) \, d\gga \le \int_X \varlimsup_{\delta \to 0} F^*_\delta (\Phi) d\gga \le \int_X F^*(\Phi) \, d\gga.
\end{equation}
We can now conclude since for every $\Phi \in \FCb(X,H)$ with $\int_X F^*(\Phi)\, d\gga<+\infty$ we have using \eqref{limsupstar},
\begin{align*}
 \varliminf_{\delta \to 0} \int_X F_\delta (D_\gga u_\delta) &\ge \varliminf_{\delta\to 0} \int_X -u_\delta \dive_\gga \Phi -F_\delta^*(\Phi) \,d\gga\\
								&\ge \int_X -u \dive_\gga \Phi -F^*(\Phi) \, d\gga
\end{align*}
Taking then the supremum on all $\Phi \in \FCb(X,H)$ and using \eqref{eqmucyl}, we get \eqref{liminfdelta}.

\end{proof}

\begin{remarque}\rm
Notice that, by taking $F(h)=|h|^p$ with $p\ge 1$, Theorem \ref{convlinear} applies in particular to the 
$p$-Dirichlet problems
\[
\min_{  \Ldeu} \int_X \normH{\nabla_H u}^p\, d\gga +\frac{1}{2} \int_X (u-g)^2 d\gga.
\]
\end{remarque}

\begin{remarque} \rm
 When $X$ is a Hilbert space, there is another definition of the gradient due to Da Prato which gives an alternative definition of Sobolev and BV spaces (see \cite[Section 5]{AMMP}). Roughly speaking it corresponds to $Du:= Q^{-\frac{1}{2}} \nabla_H u$. Theorem \ref{convlinear} then applies to the associated total variation since it is given by the choice
\[F(h)=\left( \sum_{i=1}^{+\infty} \frac{1}{\lambda_i} |h_i|^2\right)^{\frac{1}{2}}\]
where the $\lambda_i$'s are the eigenvalues of $Q$.
\end{remarque}
\begin{remarque} \rm
The proofs of Theorem \ref{convfin} and \ref{convlinear} follow standard $\Gamma$-convergence arguments (see \cite{braides}).
\end{remarque}
\noindent Using the theory of maximal monotone operators \cite{brezis}, we easily get the following corollary.
\begin{cor}
Let $F: H \to \R\cup \{+\infty\}$ be a proper l.s.c.~convex function and let $u_0\in \Ldeu$ be a convex function.
Then the solution $u(t)$ of the $\Ldeu$ gradient flow of  $\int_X F(D_\gga u)$ with initial condition $u(0)=u_0$ is convex for every $t>0$.
\end{cor}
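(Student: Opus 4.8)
The plan is to combine the theory of maximal monotone operators with the convexity result already proved in Theorem \ref{convlinear}. First I would observe that $\mathcal{J}_0(u):=\int_X F(D_\gga u)$ is a convex, proper and lower semi-continuous functional on $\Ldeu$ — lower semi-continuity is exactly the content of the representation formula \eqref{repsupF} — so that, by \cite{brezis}, its subdifferential $\partial\mathcal{J}_0$ is maximal monotone and generates a contraction semigroup $S(t)$ on $\overline{\dom\,\mathcal{J}_0}$, with $u(t)=S(t)u_0$ the unique solution of the gradient flow $u'(t)\in-\partial\mathcal{J}_0(u(t))$, $u(0)=u_0$. The structural tool I would rely on is the exponential (implicit Euler) formula $S(t)u_0=\lim_{n\to\infty}(J_{t/n})^nu_0$, a strong limit in $\Ldeu$, where $J_\tau:=(I+\tau\,\partial\mathcal{J}_0)^{-1}$ is the resolvent, i.e.~$J_\tau g$ is the unique minimizer over $\Ldeu$ of $v\mapsto\int_X F(D_\gga v)+\tfrac{1}{2\tau}\int_X(v-g)^2\,d\gga$.

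The heart of the argument is then to show that each $J_\tau$ maps convex functions to convex functions. Given a convex $g\in\Ldeu$, I would multiply the resolvent energy by $\tau$ and use that $\tau\int_X F(D_\gga v)=\int_X(\tau F)(D_\gga v)$ — which follows from \eqref{equivF} together with $(\tau F)^\infty=\tau F^\infty$ — so that $J_\tau g$ is also the unique minimizer of $v\mapsto\int_X(\tau F)(D_\gga v)+\tfrac12\int_X(v-g)^2\,d\gga$. Since $\tau F$ is again a proper l.s.c.~convex function on $H$, Theorem \ref{convlinear} (with $\tau F$ in place of $F$) applies and yields that $J_\tau g$ is convex.

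Finally I would run the implicit Euler scheme: starting from a convex $u_0\in\overline{\dom\,\mathcal{J}_0}$, set $u_{k+1}^n:=J_{t/n}u_k^n$; by induction each $u_k^n$ is convex (and lies in $\Ldeu$, so Theorem \ref{convlinear} is applicable at each step), hence $(J_{t/n})^nu_0$ is convex. Passing to the limit $n\to\infty$ exhibits $u(t)$ as a strong — hence weak — limit in $\Ldeu$ of convex functions, and I would conclude that $u(t)$ is convex using the same stability property invoked in the proof of Theorem \ref{convlinear}, namely that a weak $\Ldeu$ limit of convex functions is convex \cite[Theorem 4.4]{feyel}. Since $t>0$ is arbitrary, this gives the claim.

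I expect no serious obstacle: the real content — that one resolvent step preserves convexity — is precisely Theorem \ref{convlinear}, and the remaining points (the rescaling identity matching the $\tfrac{1}{2\tau}$ weight of the resolvent with the $\tfrac12$ weight of Theorem \ref{convlinear}, the well-posedness of the flow via \cite{brezis}, and the closedness of convexity under $\Ldeu$ limits) are routine. The only thing to keep in mind is that the flow is defined only for $u_0\in\overline{\dom\,\mathcal{J}_0}$, which is implicitly assumed in the statement.
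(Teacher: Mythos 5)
Your proposal is correct and is exactly the argument the paper leaves implicit behind its one-line remark that the corollary follows from the theory of maximal monotone operators \cite{brezis}: the resolvent $(I+\tau\,\partial\mathcal{J}_0)^{-1}g$ is the minimization problem of Theorem \ref{convlinear} with $\tau F$ in place of $F$, hence preserves convexity, and the exponential formula together with the stability of convexity under $\Ldeu$ limits \cite[Theorem 4.4]{feyel} yields the claim for every $t>0$. The only cosmetic point is that the scaling identity $\tau\int_X F(D_\gga v)=\int_X(\tau F)(D_\gga v)$ is most directly read off from the definition \eqref{repsupF}, using $(\tau F)^*(\Phi)=\tau F^*(\Phi/\tau)$, which avoids restricting to $BV_\gga(X)$ where \eqref{equivF} applies.
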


\noindent We can now use these convexity results to show the convexity of solutions of \eqref{princprob}.

\begin{thm}\label{main}
Let  $g$ be a convex function in $\Ldeu$ and let $u$ be the minimizer of \eqref{princprobu}. Let $\overline{\lambda}= \inf\{ \lambda  \; :\;  \gga( u\le \lambda)>0 \}$. If $\overline{v}=\gamma(\{u\le \overline{\lambda}\})$ then for every $v>\overline{v}$ there exists a unique solution to \eqref{princprob} and this solution is convex.
\end{thm}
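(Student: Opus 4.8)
The plan is to connect the constrained geometric problem \eqref{princprob} with the unconstrained functional \eqref{princprobu} through the coarea formula, exactly in the spirit of Caselles–Chambolle \cite{CaCha}. First I would recall that the minimizer $u$ of \eqref{princprobu} is convex by Theorem \ref{convlinear} (applied with $F(h)=\normH{h}$, so that $\int_X F(D_\gga u)=|D_\gga u|(X)$). The key link is the following claim: for almost every level $t$, the superlevel set $E_t:=\{u>t\}$ is a solution of the prescribed-curvature problem $\min_E P_\gga(E)-\int_E g\,d\gga$ among sets of measure $\gga(E_t)$. This follows from the coarea formula (Proposition \ref{procoarea}) together with the layer-cake identity $\int_X(u-g)^2\,d\gga = \int_\R\big(\cdots\big)$ rewritten via $\int_E g = \int_\R\int_{\{u>t\}} \mathbf{1}\,\cdots$; the standard computation shows that if some $E_t$ were not optimal for its own volume, one could modify $u$ on that level and strictly decrease $J$, a contradiction. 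Since $u$ is convex, each $E_t=\{u>t\}$ is a convex set (sublevel sets of convex functions... here superlevel sets of $-u$, equivalently $\{u\le t\}$ convex; note $\{u>t\}$ is the complement, so one argues instead with $\{u<t\}$ or works with $-g$ — I would phrase this carefully so the convex object is the one appearing in \eqref{princprob}).

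The second step is to control which volumes $v$ are actually attained by the superlevel sets of $u$. By Proposition \ref{contconv} applied to the convex function $u$ (with the roles arranged so that $F(t)=\gga(\{u\le t\})$), the function $t\mapsto \gga(\{u\le t\})$ is continuous on $\R\setminus\{\overline\lambda\}$, where $\overline\lambda=\inf\{\lambda:\gga(u\le\lambda)>0\}$. Hence the map $t\mapsto \gga(\{u>t\})$ takes every value in $(0,1-\overline v)$ — wait, more precisely every value in $(0,\gamma(X))\setminus$ the single possible jump at $\overline\lambda$, and the size of that jump is exactly $\overline v=\gamma(\{u\le\overline\lambda\})=\gamma(\{u=\overline\lambda\})$ when $\overline v>0$. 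Therefore for every $v>\overline v$ there is a level $t$ with $\gga(\{u>t\})=v$, and $E_t$ provides a convex solution of \eqref{princprob} with that volume constraint.

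For uniqueness, I would argue by strict convexity of the functional \eqref{princprobu}: suppose $E$ and $E'$ are two distinct solutions of \eqref{princprob} with the same volume $v>\overline v$. Then (by the reverse direction of the coarea argument) both $\chi_E$ and $\chi_{E'}$ can be used to build competitors for \eqref{princprobu} with the same energy as truncations of $u$ at the appropriate level; since the quadratic term $\frac12\int_X(u-g)^2\,d\gga$ makes $J$ strictly convex, the minimizer $u$ of \eqref{princprobu} is unique, and the superlevel sets at a given non-exceptional volume are therefore unique up to $\gga$-null sets. Combining with Proposition \ref{contconv} to rule out any interval of volumes being collapsed (which is exactly why $v>\overline v$ is needed — at $v=\overline v$ the optimal set may fail to be unique because of the flat level $\{u=\overline\lambda\}$), we conclude that \eqref{princprob} has a unique, convex solution for every $v>\overline v$.

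The main obstacle I anticipate is making the coarea/layer-cake correspondence between \eqref{princprobu} and \eqref{princprob} rigorous in the Wiener setting: one must check that the decomposition $|D_\gga u|(X)=\int_\R P_\gga(\{u>t\})\,dt$ interacts correctly with the quadratic fidelity term, and in particular that a set of finite $\gga$-perimeter attaining a given volume can always be realized as a superlevel set of a competitor without losing measurability or integrability — this is where the infinite-dimensional lack of local compactness could bite, but since all the needed tools (coarea, lower semicontinuity of $\int_X F(D_\gga\cdot)$ from \eqref{repsupF}, continuity from Proposition \ref{contconv}) have already been established above, the argument should go through essentially as in the Euclidean case.
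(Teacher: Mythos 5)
Your overall strategy is the same as the paper's (level sets of the minimizer $u$ of \eqref{princprobu}, coarea, Proposition \ref{contconv} to sweep the volumes, Lagrange multipliers to pass to the constrained problem), but the two steps on which everything rests are not actually justified by what you sketch. First, the claim that a level set of $u$ is optimal at its own volume ``because otherwise one could modify $u$ on that level and strictly decrease $J$'' does not work as stated: if you replace one level set (or a band of levels) by a competitor $E'$ of the same volume that is not nested between the neighbouring level sets, you no longer have a well-defined function, and neither the coarea formula nor the layer-cake decomposition of the quadratic term splits level-by-level so as to allow such surgery. The rigorous link goes through the \emph{penalized} problems $(P_\lambda):\ \min_E P_\gga(E)+\int_E(g-\lambda)\,d\gga$: either one proves by truncation/submodularity comparisons that $\{u<\lambda\}$ minimizes $(P_\lambda)$ (this is Lemma 3.5 of \cite{antolinz}), or, as the paper does following \cite{CaMiNo}, one minimizes $(P_\lambda)$ directly (direct method in $BV_\gga(X;[0,1])$ plus coarea), uses the comparison lemma \cite[Lemma 8]{CaMiNo} to get the monotone family $E_{\lambda_1}\subset E_{\lambda_2}$, reconstructs $w(x)=\inf\{\lambda:\ x\in E_\lambda\}$, shows $w$ minimizes \eqref{princprobu} and concludes $w=u$, $E_\lambda=\{u<\lambda\}$ by uniqueness of the minimizer of \eqref{princprobu}. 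Once one knows the $E_\lambda$ minimize $(P_\lambda)$, constrained optimality at volume $\gga(E_\lambda)$ is immediate, since the $\lambda$-term is constant among sets of equal measure; but that intermediate step cannot be skipped. Relatedly, you flag but never resolve the orientation issue: the convex sets are the \emph{sublevel} sets $\{u<\lambda\}$, and these are the ones minimizing $(P_\lambda)$; you must commit to this choice (and to the corresponding sign in front of $\int_E g$) for the convexity conclusion to make sense.

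Second, your uniqueness argument is not correct as sketched. Strict convexity of $J$ gives uniqueness of $u$, but by itself this says nothing about uniqueness of solutions of the constrained problem: one must first show that \emph{every} constrained minimizer with volume $v=\gga(E_\lambda)$ is a minimizer of $(P_\lambda)$ (easy: subtract the constant $\lambda v$), and then that $(P_\lambda)$ has a \emph{unique} minimizer for $\lambda>\overline\lambda$. In the paper this last point does not come from convexity of $J$ at all; it comes from the comparison property $E_{\lambda_1}\subset E_{\lambda_2}$ for $\lambda_1<\lambda_2$ combined with the continuity of $\lambda\mapsto\gga(E_\lambda)$ on $(\overline\lambda,+\infty)$ given by Proposition \ref{contconv} (two minimizers at the same $\lambda$ would be squeezed between $E_{\lambda'}$ for $\lambda'<\lambda<\lambda''$, whose measures converge to a common value). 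Your proposal contains neither the comparison lemma nor any substitute for it, so both the uniqueness of minimizers of $(P_\lambda)$ and, consequently, the uniqueness of the constrained solution are left unproven.
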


\begin{proof}
The proof follows quite standard arguments so that we only sketch it (see \cite{CaMiNo} and \cite{ACC} for details).
Let us first consider the problem
\[ 
\min_E \Pgamma{E} + \int_E (g-\lambda)\, d\gga. 
\eqno (P_\lambda) 
\]
Then as in Proposition 34 of \cite{CaMiNo}, by the direct method of the calculus of variations (in $BV_\gga(X;[0,1])$)
 and by the coarea formula it is not difficult to show that $(P_\lambda)$ has a minimum $E_\lambda$. By \cite[Lemma 8]{CaMiNo} we have $E_{\lambda_1} \subset E_{\lambda_2}$ if $\gl_1< \gl_2$. 

\noindent Setting $w(x)=\inf \left\{ \lambda \; :\;  x \in E_\lambda \right\}$, it is not hard to see that $w \in BV_\gga \cap \Ldeu$  and that $w$ solves \eqref{princprobu} (see \cite{CaMiNo} again or Lemma 3.5 in \cite{antolinz}). By the uniqueness of minimizers of \eqref{princprobu}, $w=u$ and $E_\gl= \{u<\gl\}$ for almost every $\gl$ (and then for every $\gl$ by an approximation procedure).

\noindent By Proposition \ref{contconv}, the function $\lambda \rightarrow \vgamma{E_\gl}$ is continuous on $]\overline{\lambda},+\infty[$ and nondecreasing. Together with the inclusion property of the $E_\gl$ this implies the uniqueness of the minimizers of $(P_\gl)$. Moreover, the sets $E_\lambda$ solve the problem:
\[\min_{\vgamma{E_\gl}=\vgamma{E}} \Pgamma{E}+ \int_E g\, d\gga.\]
Vice-versa, if $E_v$ solves \eqref{princprob} and $v>\overline{v}$ then there exists $\gl>\overline{\gl}$ such that $\vgamma{E_\gl}=v$ and as $E_v$ solves $(P_\gl)$ we get $E_v=E_\gl$. 
\end{proof}
\begin{remarque}\label{remperi}\rm
 If $F:H\to \R$ is homogeneous of degree one and such that 
\[c \normH{h} \le F(h) \le C \normH{h} \qquad \forall h \in H,\]
then $F$ satisfies (H2) and we can define the anisotropic perimeter $P_F$ by 
\[P_F(E):= \int_X F(D_\gga \chi_E).\]
Repeating verbatim the proof of \cite[Section 5.5]{EG}, (and using that smooth cylindrical functions are dense in $BV_\gga(X)$ by Proposition \ref{relax}), we still have a coarea formula,
\[\int_X F(D_\gga u) =\int_{\R} P_F( \{u <t\}) \ dt \qquad \forall u \in BV_\gga(X).\]
Using Theorem \ref{convlinear}, it is then not difficult to extend Theorem \ref{CaMiNointro} and Theorem \ref{main} to these anisotropic perimeters $P_F$. \\

\noindent Notice that in the Wiener space, the solution of the Wulff problem
\begin{equation}\label{wulffW}
 \min_{\gga(E)=v} P_F(E)
\end{equation}
is quite simple. If $F$ attains its minimum on the sphere at some direction $\nu_{\min}$ then by the isoperimetric inequality, if $E_{\nu_{\min}}$ is the half-space of volume $v$ and normal $\nu_{\min}$ and $E$ is any other set with volume $v$,
\[P_F(E_{\nu_{\min}})= F(\nu_{\min}) P_\gga(E_{\nu_{\min}})\le F(\nu_{\min}) P_\gga(E)\le P_F(E)\]
and thus $E_{\nu_{\min}}$ is the minimizer of \eqref{wulffW}. If instead $F$ does not attain its minimum on the sphere, there is no solution to \eqref{wulffW}.
\end{remarque}

\noindent We can finally state a simple corollary.
\begin{cor}
Let $g$ be a convex function in $\Ldeu$ and let 
\[F(E)=\Pgamma{E}+\int_E g \,d\gga. \]
Then two situations can occur:
\begin{itemize}
\item If $\min F<0$ then there exists a unique non-empty minimizer of $F$. Moreover this minimizer is convex.
\item If $\min F=0$ then there exists at most one non-empty minimizer of $F$ which is then convex.
\end{itemize}
\end{cor}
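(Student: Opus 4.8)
The plan is to reduce the minimization of $F(E)=P_\gga(E)+\int_E g\,d\gga$ over sets of finite perimeter to the unconstrained convex problem \eqref{princprobu}, exploiting the machinery already built in Theorem \ref{main}. First I would recognize $F(E)$ as exactly the functional $(P_\gl)$ appearing in the proof of Theorem \ref{main} with the choice $\gl=0$, so that $F=F(\cdot)$ is minimized by the superlevel-set construction applied to the minimizer $u$ of \eqref{princprobu}: the set $E_0=\{u<0\}$ (equivalently $\{u\le 0\}$ up to a $\gga$-negligible set, by the continuity in Proposition \ref{contconv}) is a minimizer of $F$, since by the coarea formula and the layer-cake identity
\[
J(u)=\int_X F(D_\gga u)+\tfrac12\int_X (u-g)^2\,d\gga=\int_\R\Big(P_\gga(\{u<t\})+\int_{\{u<t\}}(g-t)\,d\gga+c(t)\Big)\,dt,
\]
where $c(t)$ collects terms not depending on the competitor at level $t$, and the integrand is minimized at each level by $E_t$; in particular at $t=0$ the set $E_0$ minimizes $P_\gga(E)+\int_E g\,d\gga=F(E)$. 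Since $u$ is convex by Theorem \ref{convlinear}, each sublevel set $\{u<t\}$ is convex, hence $E_0$ is a convex set.

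Next I would separate the two cases according to the sign of $\min F$. If $\min F<0$, then the minimizer $E$ must be non-empty (the empty set gives $F(\emptyset)=0$), and I would obtain uniqueness from the strict monotonicity argument already used in Theorem \ref{main}: by \cite[Lemma 8]{CaMiNo} the minimizers of $(P_\gl)$ are nested, and by Proposition \ref{contconv} the map $\gl\mapsto\gga(E_\gl)$ is continuous away from $\overline\gl$; applied at $\gl=0$ this forces all minimizers of $F$ to have the same $\gga$-measure, and then nestedness forces them to coincide ($\gga$-a.e.). One still needs $0\ne\overline\gl$, i.e.\ $\overline\lambda<0$; but $\overline\lambda<0$ is equivalent to $\gga(\{u\le 0\})>0$, which holds precisely because $E_0$ is non-empty (of positive measure), consistent with $\min F<0$. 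Thus in this case there is a unique non-empty minimizer and it is convex.

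If $\min F=0$, then $\emptyset$ is a minimizer, and I would show there is \emph{at most} one non-empty minimizer, again by the nestedness of minimizers of $(P_0)$: any two non-empty minimizers are nested, and if they were distinct the one with smaller measure would be sandwiched strictly, contradicting minimality via the coarea decomposition above (the difference of their energies would be $\int_X F(D_\gga\chi_{E_2})-\int_X F(D_\gga\chi_{E_1})$ over a level where the integrand has a unique minimizer of that measure). Such a minimizer, if it exists, is the convex set $\{u<0\}$ by the same construction as before. The main obstacle I expect is the careful bookkeeping at the critical level $t=0$: one must make sure that the sublevel-set construction still identifies a genuine minimizer of $F$ (not merely of an averaged functional), handle the possible non-uniqueness of the "right" level (i.e.\ that several values of $\gl$ can give $\gga(E_\gl)=\gga(E_0)$ only when $\gl\le\overline\gl$), and check that the coarea/layer-cake splitting of $J$ is valid in the Wiener setting — but all of this is already done implicitly in the proof of Theorem \ref{main}, so the corollary follows by specialization.
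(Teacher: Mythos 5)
Your overall route is the paper's: the proof in the paper is a one-line remark that the two cases correspond to $\overline{\gl}<0$ and $\overline{\gl}\ge 0$, i.e.\ exactly your specialization of the $(P_\gl)$ machinery of Theorem \ref{main} at $\gl=0$. Your first case is essentially right, and deferring the fact that $\{u<\gl\}$ minimizes $(P_\gl)$ for \emph{every} $\gl$ (not just a.e.\ $\gl$, which is all your layer-cake display gives) to the approximation step in the proof of Theorem \ref{main} is legitimate. One slip to fix there: $\overline{\gl}<0$ is equivalent to $\gga(\{u<0\})>0$, not to $\gga(\{u\le 0\})>0$ (the latter only gives $\overline{\gl}\le 0$); the correct deduction is that $\{u<0\}$ is a minimizer of $F$, so $\min F<0$ forces $\gga(\{u<0\})>0$, hence $\overline{\gl}<0$, and then the squeeze between $\{u<\gl'\}$, $\gl'<0$, and $\{u<\gl''\}$, $\gl''>0$, together with $\gga(\{u=0\})=0$ (Proposition \ref{contconv}, usable since $0\neq\overline{\gl}$) gives uniqueness and convexity.

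The genuine gap is in the case $\min F=0$. There necessarily $\overline{\gl}\ge 0$ (otherwise case~1 yields $\min F<0$), so $\gga(\{u<0\})=0$: the set $\{u<0\}$, which you name as the potential non-empty minimizer, is $\gga$-equivalent to the empty set, and Proposition \ref{contconv} gives no continuity at $\gl=0$ precisely when $0=\overline{\gl}$, which is the only sub-case in which a non-empty minimizer can occur. The correct candidate is $\{u\le 0\}$, and even its minimality needs a separate argument (e.g.\ as the decreasing limit of the minimizers $\{u<\gl\}$ as $\gl\downarrow 0$, using lower semicontinuity of the relaxed perimeter and continuity in $\gl$ of the minimum value of $(P_\gl)$), which your sketch does not contain. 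Moreover your uniqueness argument in this case does not work as stated: \cite{CaMiNo}, Lemma~8, gives nestedness of minimizers at two \emph{different} levels, not of two minimizers of $(P_0)$ itself, and ``the smaller one would be sandwiched strictly, contradicting minimality'' is a non sequitur, since two minimizers have equal energy and nothing is contradicted. What the squeeze actually yields at $\gl=\overline{\gl}=0$ is only that every minimizer is contained in $\{u\le 0\}$; ruling out a second, strictly smaller non-empty minimizer requires an additional argument (the paper leaves this implicit as well, relying on \cite{ACC,CaMiNo}), so this step cannot simply be declared a specialization of Theorem \ref{main}, whose uniqueness statement covers only $\gl\neq\overline{\gl}$, respectively $v>\overline{v}$.
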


\begin{proof}
The two possibilities corresponds respectively to $\overline{\gl}<0$ and $\overline{\gl}\ge 0$.
\end{proof}

\noindent{\bf Acknowledgements.} 
The second author greatfully acknowledge the hospitality of the Scuola Normale Superiore di Pisa where this work has been initiated.
The third author wish to thank Michele Miranda for an interesting discussion regarding Remark \ref{remperi}.

\end{document}